\renewcommand{\mat}{\boldsymbol}
\let\oldthebibliography=\thebibliography
\let\oldendthebibliography=\endthebibliography
\renewcommand{\cite}{\citep}
\let\thebibliography=\oldthebibliography
\let\endthebibliography=\oldendthebibliography
\renewcommand*{\backref}[1]{}%
\renewcommand*{\backrefalt}[4]{%
  \ifcase #1 %
    No citations.%
  \or
    Cited on page #2.%
  \else
    Cited on pages #2.%
  \fi
}%
\newcommand{\cel}[1]{\ushort{#1}}
\newcommand{\celm}[1]{\cel{\mat{#1}}}
\providecommand{\cmP}{\ensuremath{\celm{P}}}
\providecommand{\cmR}{\ensuremath{\celm{R}}}
\newtheorem{remark}[theorem]{Remark}
\newtheorem{observation}[theorem]{Observation}
\newcommand{\fullkron}[2]{\underbrace{{#1} \kron \cdots \kron {#1}}_{\text{${#2}$ terms}}}
\newcommand{\given}{\mid}
\global\let\tikz@ensure@dollar@catcode=\relax
\newcommand{\occ}{\vw}
\newcommand{\occmat}{\mM}
\newcommand{\prtel}{\vv}
\newcommand{\occseq}[1]{\occ(#1)}
\newcommand{\xseq}[1]{X(#1)}
\newcommand{\yseq}[1]{Y(#1)}
\newcommand{\pijk}{\cmP_{ijk}}
\newcommand{\paak}[1]{\cmP_{\bullet\bullet#1}}
\title{The Spacey Random Walk: \\ a Stochastic Process for Higher-order Data\thanks{ARB is supported by a Stanford Graduate Fellowship. 
DFG is supported by NSF CCF-1149756, IIS-1422918, IIS-1546488, and the DARPA SIMPLEX program. 
DFG is also partially supported by an Alfred P. Sloan Research Fellowship.
LHL is supported by AFOSR FA9550-13-1-0133, DARPA D15AP00109, NSF IIS-1546413, DMS-1209136, and DMS-1057064.}
}
\author{Austin~R.~Benson\thanks{Institute for Computational and Mathematical Engineering, Stanford University, 475 Via Ortega, Stanford, CA 94305, USA (\email{arbenson@stanford.edu}).}
\and
David~F.~Gleich\thanks{Department of Computer Science, Purdue University, 305 North University Avenue, West Lafayette, IN 47907, USA (\email{dgleich@purdue.edu}).}
\and
Lek-Heng~Lim\thanks{Computational and Applied Mathematics Initiative, Department of Statistics, University of Chicago, 5747 South Ellis Avenue, Chicago, IL 60637, USA (\email{lekheng@uchicago.edu}).}}
\begin{document}

\maketitle

\begin{abstract}%
Random walks are a fundamental model in applied mathematics and are a common example of a Markov chain. 
The limiting stationary distribution of the Markov
chain represents the fraction of the
time spent in each state during the stochastic process.  
A standard way to compute this distribution for a random walk on a finite set of states
is to compute the Perron vector of the associated transition matrix.  
There are algebraic analogues of this Perron vector in terms of transition probability 
tensors of higher-order Markov chains. 
These vectors are nonnegative, have dimension equal to the dimension of the state space, and sum
to one and are derived by making an algebraic substitution in the 
equation for the joint-stationary distribution of a higher-order Markov chains. 
Here, we present the \emph{spacey random walk}, a non-Markovian
stochastic process whose stationary distribution is given by the tensor
eigenvector.  The process itself is a vertex-reinforced random walk, and its discrete
dynamics are related to a continuous dynamical system.  We analyze the
convergence properties of these dynamics and discuss numerical methods for
computing the stationary distribution.  Finally, we provide several applications
of the spacey random walk model in population genetics, ranking, and clustering data, 
and we use the process to analyze taxi trajectory
data in New York. This example shows definite non-Markovian structure. 
\end{abstract}

\section{Random walks, higher-order Markov chains, and stationary distributions}

Random walks and Markov chains are one of the most well-known and studied
stochastic processes as well as a common tool in applied mathematics. A random
walk on a finite set of states is a process that moves from state to state in a
manner that \emph{depends only on the last state} and an associated set of
transition probabilities from that state to the other states.  Here is one such
example, with a few sample trajectories of transitions:
\begin{center}
  \scalebox{0.7}{%
\begin{tikzpicture}
\begin{scope}[every node/.style={ultra thick, draw=black, ellipse, minimum width=45pt,
    align=center}]
    \node (1) at (5,10) {1};
    \node (2) at (10,10) {2};
    \node (3) at (7.5,7.5) {3};    
\end{scope}

\begin{scope}[every node/.style={fill=white,circle},
              every edge/.style={draw=black,very thick,>=triangle 60}]
    \path [->] (1) edge[out=140, in=80, loop] node {\footnotesize $1/2$} (1);    
    \path [->] (1) edge node {\footnotesize $1/4$} (2);
    \path [->] (1) edge[bend left=20] node {\footnotesize $1/4$} (3);

    \path [->] (2) edge[out=40, in=100, loop] node {\footnotesize $2/3$} (2);
    \path [->] (2) edge[bend right=20] node {\footnotesize $1/3$} (3);
    
    \path [->] (3) edge[bend left=20] node {\footnotesize $3/5$} (1);
    \path [->] (3) edge[bend right=20] node {\footnotesize $1/5$} (2);
    \path [->] (3) edge[out=-57.5, in=-122.5, loop] node {\footnotesize $1/5$} (3);    
\end{scope}

\newcommand{\aypos}{9.5}
\newcommand{\bypos}{\aypos-0.75}
\newcommand{\cypos}{\bypos-0.75}
\newcommand{\xshift}{7}
\newcommand{\cdotspace}{\phantom{.}}

\begin{scope}[every node/.style={ultra thick, align=center}]
    \node (a1) at (4.5+\xshift,\aypos) {1};
    \node (a2) at (6.0+\xshift,\aypos) {1};
    \node (a3) at (7.5+\xshift,\aypos) {3};    
    \node (a4) at (9.0+\xshift,\aypos) {3};
    \node (a5) at (10.5+\xshift,\aypos) {1 \cdotspace\large$\cdots$};
\end{scope}
\begin{scope}[every node/.style={above,},
              every edge/.style={draw=black,>=triangle 60}]
    \path [->] (a1) edge node {\scriptsize $1/2$} (a2);
    \path [->] (a2) edge node {\scriptsize $1/4$} (a3);
    \path [->] (a3) edge node {\scriptsize $1/5$} (a4);    
    \path [->] (a4) edge node {\scriptsize $3/5$} (a5);    
\end{scope}

\begin{scope}[every node/.style={ultra thick, align=center}]
    \node (b1) at (4.5+\xshift,\bypos) {2};
    \node (b2) at (6.0+\xshift,\bypos) {3};
    \node (b3) at (7.5+\xshift,\bypos) {3 \cdotspace\large$\cdots$};   
\end{scope}
\begin{scope}[every node/.style={above,},
              every edge/.style={draw=black,>=triangle 60}]
    \path [->] (b1) edge node {\scriptsize $1/3$} (b2);
    \path [->] (b2) edge node {\scriptsize $1/5$} (b3);
\end{scope}

\begin{scope}[every node/.style={ultra thick, align=center}]
    \node (c1) at (4.5+\xshift,\cypos) {1};
    \node (c2) at (6.0+\xshift,\cypos) {2};
    \node (c3) at (7.5+\xshift,\cypos) {3};
    \node (c4) at (9.0+\xshift,\cypos) {2 \cdotspace\large$\cdots$};
\end{scope}
\begin{scope}[every node/.style={above,},
              every edge/.style={draw=black,>=triangle 60}]
    \path [->] (c1) edge node {\scriptsize $1/4$} (c2);
    \path [->] (c2) edge node {\scriptsize $1/3$} (c3);
    \path [->] (c3) edge node {\scriptsize $1/5$} (c4);    
\end{scope}
\end{tikzpicture}
} 
\end{center}
This process is also known as a Markov chain, and in the setting we consider the
two models, Markov chains and random walks, are equivalent. (For the experts
reading, we are considering time-homogenous walks and Markov chains on finite
state-spaces.) Applications of random walks include:
\begin{compactitem}
\item Google's PageRank model~\cite{langville2006-book}. States are web-pages and transition probabilities correspond to hyperlinks between pages. 
\item Card shuffling. Markov chains can be used to show that shuffling a card deck seven times is sufficient, for example~\cite{Bayer-1992-shuffling,jonsson1997-cutoff}. In this case, the states correspond to permutations of a deck of cards and transitions are the result of a riffle shuffle.
\item Markov chain Monte Carlo and Metropolis Hastings~\cite{glynn2007-stochastic-simulation}. Here, the goal is to sample from complicated probability distributions that could model extrema of an function when used for optimization or the uniform distribution if the goal is a random instance of a complex mathematical objects such as matchings or Ising models. For optimization, states reflect the optimization variables and transitions are designed to stochastically ``optimize'' the function value; for random instance generation, states are the mathematical objects and transitions are usually simple local rearrangements. 
\end{compactitem}

One of the key properties of a Markov chain is its stationary distribution. This
models where we expect the walk to be ``on average'' as the process runs to
infinity. (Again, for the experts, we are concerned with the Ces\`{a}ro limiting
distributions.) To compute and understand these stationary distributions, we
turn to matrices. For an $N$ state Markov chain, there is an $N \times N$ column
stochastic matrix $\mP$, where $\mP_{ij}$ is the probability of transitioning to
state $i$ from state $j$.  The matrix for the previous chain is:
\[ \mP = \bmat{ 1/2 & 0 & 3/5 \\
	            1/4 & 2/3 & 1/5 \\
	            1/4 & 1/3 & 1/5 }.\] 
A stationary distribution on the states is a vector $\vx \in \mathbb{R}^N$ satisfying
\begin{align}
\vx_{i} = \sum_{j}\mP_{ij}\vx_{j}, \quad \sum_{i} \vx_{i} = 1, \quad \vx_{i} \ge 0,\; 1 \le i \le N.
\end{align}
The existence and uniqueness of $\vx$, as well as efficient numerical algorithms
for computing $\vx$, are all
well-understood~\cite{kemeny1960finite,stewart1994introduction}.

A natural extension of Markov chains is to have the transitions depend on the
past few states, rather than just the last one.  These processes are called
\emph{higher-order Markov chains} and are much better at modeling
data in a variety of applications including
airport travel flows~\cite{rosvall2014memory},
e-mail communication~\cite{rosvall2014memory},
web browsing behavior~\cite{chierichetti2012web},
and network clustering~\cite{krzakala2013spectral}.
For example, here is the transition probability table for a second-order
Markov chain model on the state space $\{1, 2, 3\}$:
\begin{equation}\label{eq:second-order-example}
\mbox{
\begin{tabularx}{0.85\linewidth}{@{}l@{\quad}X@{\;}X@{\;}XX@{\;}X@{\;}XX@{\;}X@{\;}X@{}}
\toprule
Second last state & 1 & & & 2 & & & 3 &  \\
\cmidrule(r){2-4} 	
\cmidrule(l){5-7} 	
\cmidrule(l){8-10} 	
Last state & 1 & 2 & 3 & 1 & 2 & 3 & 1 & 2 & 3 \\
\midrule	
Prob.~next state is 1 & 0   & 0   & 0 & 1/4 & 0 & 0   & 1/4 & 0   & 3/4 \\
Prob.~next state is 2 & 3/5 & 2/3 & 0 & 1/2 & 0 & 1/2 & 0   & 1/2 & 0   \\
Prob.~next state is 3 & 2/5 & 1/3 & 1 & 1/4 & 1 & 1/2 & 3/4 & 1/2 & 1/4 \\	
\bottomrule
\end{tabularx}
}
\end{equation}
Thus, for instance, if the last two states were $2$ (last state) and $1$ (second last state), then the next state will be $2$ with probability $2/3$ and $3$ with probability $1/3$. 

Stationary distributions of higher-order Markov chains show which states appear
``on average'' as the process runs to infinity. To compute them, we turn to
hypermatrices. We can encode these probabilities into a transition hypermatrix
where $\pijk$ is the probability of transitioning to state $i$, given that
the last state is $j$ and the second last state was $k$.  So for this example,
we have $\cmP_{321} = 1/3$. In this case, $\sum_{i} \pijk = 1$ for all
$j$ and $k$, and we call these stochastic hypermatrices. (For a $m$-order Markov
chain, we will have an $m+1$-order stochastic hypermatrix). The stationary
distribution of a second-order Markov chain, it turns out, is simply computed by
converting the second-order Markov chain into a first-order Markov chain on a
larger state space given by \emph{pairs} of states. To see how this conversion
takes place, note that if the previous two states were $2$ and $1$ as in the
example above, then we view these as an ordered pair $(2,1)$. The next pair of
states will be either $(2,2)$ with probability $2/3$ and $(3,2)$ with
probability $1/3$. Thus, the sequence of states generated by a second-order
Markov chain can be extracted from the sequence of states of a first-order
Markov chain defined on \emph{pairs} of
states. Figure~\ref{fig:second-order-markov} shows a graphical illustration of
the first-order Markov chain that arises from our small example.

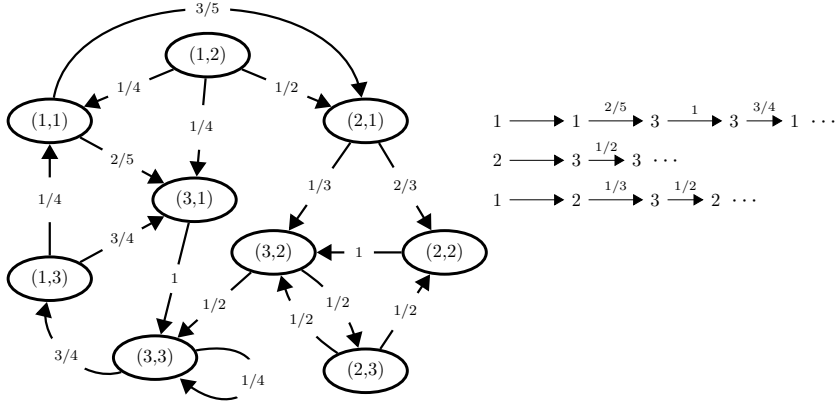
\begin{figure}
\begin{center}
  \scalebox{0.7}{
\begin{tikzpicture}
\begin{scope}[every node/.style={ultra thick, draw=black, ellipse, minimum width=45pt,
    align=center}]
    \node (11) at (5,15.5) {(1,1)};
    \node (21) at (11,15.5) {(2,1)};
    \node (31) at (7.75,14) {(3,1)};
    \node (12) at (8,16.75) {(1,2)};
    \node (22) at (12.5,13) {(2,2)};
    \node (32) at (9.25,13) {(3,2)};
    \node (13) at (5,12.5) {(1,3)};
    \node (23) at (11,10.75) {(2,3)};
    \node (33) at (7,11) {(3,3)};        
\end{scope}

\begin{scope}[every node/.style={fill=white,circle},
              every edge/.style={draw=black,very thick,>=triangle 60}]
    \path [->] (11) edge node {\footnotesize $2/5$} (31);
    \path [->] (11) edge[bend left=80] node {\footnotesize $3/5$} (21);
        
    \path [->] (21) edge node {\footnotesize $1/3$} (32);
    \path [->] (21) edge node {\footnotesize $2/3$} (22);
        
    \path [->] (31) edge node {\footnotesize $1$} (33);
    
    \path [->] (12) edge node {\footnotesize $1/4$} (11);
    \path [->] (12) edge node {\footnotesize $1/2$} (21);
    \path [->] (12) edge node {\footnotesize $1/4$} (31);    

    \path [->] (22) edge node {\footnotesize $1$} (32); 

    \path [->] (32) edge[bend left=20] node {\footnotesize $1/2$} (23);
    \path [->] (32) edge node {\footnotesize $1/2$} (33);    
    
    \path [->] (13) edge node {\footnotesize $1/4$} (11);
    \path [->] (13) edge node {\footnotesize $3/4$} (31);
    
    \path [->] (23) edge node {\footnotesize $1/2$} (22);
    \path [->] (23) edge[bend left=20] node {\footnotesize $1/2$} (32);    
    
    \path [->] (33) edge[bend left=60] node {\footnotesize $3/4$} (13);
    \path [->] (33) edge[out=10, in=-40, loop] node {\footnotesize $1/4$} (33);    
\end{scope}

\newcommand{\aypos}{15.5}
\newcommand{\bypos}{\aypos-0.75}
\newcommand{\cypos}{\bypos-0.75}
\newcommand{\xshift}{9}
\newcommand{\cdotspace}{\phantom{.}}

\begin{scope}[every node/.style={ultra thick, align=center}]
    \node (a1) at (4.5+\xshift,\aypos) {1};
    \node (a2) at (6.0+\xshift,\aypos) {1};
    \node (a3) at (7.5+\xshift,\aypos) {3};    
    \node (a4) at (9.0+\xshift,\aypos) {3};
    \node (a5) at (10.5+\xshift,\aypos) {1 \cdotspace\large$\cdots$};
\end{scope}
\begin{scope}[every node/.style={above,},
              every edge/.style={draw=black,>=triangle 60}]
    \path [->] (a1) edge node {} (a2);
    \path [->] (a2) edge node {\scriptsize $2/5$} (a3);
    \path [->] (a3) edge node {\scriptsize $1$} (a4);    
    \path [->] (a4) edge node {\scriptsize $3/4$} (a5);    
\end{scope}

\begin{scope}[every node/.style={ultra thick, align=center}]
    \node (b1) at (4.5+\xshift,\bypos) {2};
    \node (b2) at (6.0+\xshift,\bypos) {3};
    \node (b3) at (7.5+\xshift,\bypos) {3 \cdotspace\large$\cdots$};
\end{scope}
\begin{scope}[every node/.style={above,},
              every edge/.style={draw=black,>=triangle 60}]
    \path [->] (b1) edge node {} (b2);
    \path [->] (b2) edge node {\scriptsize $1/2$} (b3);
\end{scope}

\begin{scope}[every node/.style={ultra thick, align=center}]
    \node (c1) at (4.5+\xshift,\cypos) {1};
    \node (c2) at (6.0+\xshift,\cypos) {2};
    \node (c3) at (7.5+\xshift,\cypos) {3};
    \node (c4) at (9.0+\xshift,\cypos) {2 \cdotspace\large$\cdots$};
\end{scope}
\begin{scope}[every node/.style={above,},
              every edge/.style={draw=black,>=triangle 60}]
    \path [->] (c1) edge node {} (c2);
    \path [->] (c2) edge node {\scriptsize $1/3$} (c3);
    \path [->] (c3) edge node {\scriptsize $1/2$} (c4);    
\end{scope}
\end{tikzpicture}
  }
\end{center}
\vspace{-0.7cm}
\caption{%
The first-order Markov chain that results from converting the second-order
Markov chain in~\eqref{eq:second-order-example} into a first-order Markov chain
on pairs of states. Here, state $(i, j)$ means that the last state was $i$ and
the second-last state was $j$.  Because we are modeling a second-order Markov
chain, the only transitions from state $(i, j)$ are to state $(k, i)$ for some
$k$.
}
\label{fig:second-order-markov}
\end{figure}

The stationary distribution of the first-order chain is an $N\times N$ matrix $\mX$ where $\mX_{ij}$ is the stationary probability associated with the pair of states $(i,j)$. This matrix satisfies the stationary equations: 
\begin{align}
\mX_{ij} = \sum_{k} \pijk \mX_{jk},
\quad \sum_{i, j} \mX_{ij} = 1,
\quad \mX_{ij} \ge 0,\; 1 \le i, j \le N.\label{eqn:second_order_stationary}
\end{align}
(These equations can be further transformed into a matrix and a vector $\vx \in \mathbb{R}^{N^2}$ if desired, but for our exposition, this is unnecessary.)
The conditions when $\mX$ exists can be deduced from applying Perron-Frobenius theorem to this more complicated equation. Once the matrix $\mX$ is found,
the stationary distribution over states of the second-order chain is given by the row and column sums of $\mX$. 

Computing the stationary distribution in this manner requires
$\Theta(N^2)$ storage, regardless of any possible efficiency in storing and manipulating $\cmP$.  For modern problems on large datasets, this is infeasible.\footnote{We wish to mention that our goal is not the stationary distribution of the higher-order chain in a space efficient matter. For instance, a memory-friendly alternative is directly simulating the chain and computing an empirical estimate of the stationary distribution. Instead, our goal is to understand a recent \emph{algebraic} approximation proposed to these stationary distribution in terms of tensor eigenvectors.}

Recent work by~\citet{li2014limiting} provides a space-friendly alternative approximation through $z$ eigenvectors of the transition hypermatrix, as we now explain. \Citet{li2014limiting} consider a ``rank-one approximation'' to $\mX$, \emph{i.e}, $\mX_{ij} =
\vx_{i}\vx_{j}$ for some vector $\vx \in \mathbb{R}^{N}$.  
In this case, Equation~\eqref{eqn:second_order_stationary}
reduces to
\begin{align}
&\vx_{i} = \sum_{jk} \pijk\vx_{j}\vx_{k},
\quad \sum_{i} \vx_{i} = 1,
\quad \vx_{i} \ge 0,\; 1 \le i \le N.\label{eqn:ling_stationary}
\end{align}
Without the stochastic constraints on the vector entries, $\vx$ is called a $z$
eigenvector~\cite{qi2005eigenvalues} or an $l^2$
eigenvector~\cite{lim2005singular} of $\cmP$.  \citet{li2014limiting} and
\citet{gleich2015multilinear} analyze when a solution vector $\vx$ for
Equation~\ref{eqn:ling_stationary} exists and provide algorithms for computing
the vector.  These algorithms are guaranteed to converge to a unique solution
vector $\vx$ if $\cmP$ satisfies certain properties.  Because the entries of
$\vx$ sum to one and are nonnegative, they can be interpreted as a probability
vector.  However, this transformation was algebraic. 
We do not have a canonical process like the random
walk or second-order Markov chain connected to the vector $\vx$.
 
In this manuscript, we provide an underlying stochastic process, the \emph{spacey random walk},
where the limiting proportion of the time spent at each node---if this quantity
exists---is the $z$ eigenvector $\vx$ computed above. The process is an instance
of a \emph{vertex-reinforced random walk}~\cite{pemantle2007survey}.  The
process acts like a random walk but edge traversal probabilities depend on
previously visited nodes.  We will make this idea formal in the following
sections.

Vertex-reinforced random walks were introduced by \citet{coppersmith1987random} and \citet{pemantle1988random}, and
the spacey random walk is a specific type of a more general class of
vertex-reinforced random walks analyzed by~\citet{benaim1997vertex}.  A crucial
insight from Bena\"{i}m's analysis is the connection between the discrete
stochastic process and a continuous dynamical system.  Essentially, the limiting
probability of time spent at each node in the graph relates to the long-term
behavior of the dynamical system.  For our vertex-reinforced random walk, we can
significantly refine this analysis.  For example, in the special case of a
two-state discrete system, we show that the corresponding dynamical system for
our process always converges to a stable equilibrium
(Section~\ref{sec:two_state}).  
When our process has several states, we give
sufficient conditions under which the standard methods for dynamical systems will numerically
converge to a stationary distribution (Section~\ref{sec:numerical_convergence}).

We provide several applications of the spacey random walk process in
Section~\ref{sec:applications} and use the model to analyze a dataset of taxi
trajectories in Section~\ref{sec:data}.  Our work adds to the long
and continuing history of the application of tensor%
\footnote{These are coordinate representations of hypermatrices and are often
  incorrectly referred to as ``tensors.'' In particular, the transition
  hypermatrix of a higher-order Markov chain is a coordinate dependent and is
  not a tensor.  We refer to \citet{lim2013tensors} for formal definitions of
  these terms.}
or hypermatrix methods to data~
\cite{Harshman-1970-parafac,Comon:1994:ICA,SidiropoulosGB00,AllmanRhodes03:mb,SmilBG04,Sun-2006-tensor,anandkumar2014tensor}.
For example, eigenvectors of structured hypermatrix data are crucial to new
algorithms for parameter recovery in a variety of machine learning
applications~\cite{anandkumar2014tensor}.  In this paper, our hypermatrices come
from a particular structure---higher-order Markov chains---and we believe that
our new understanding of the eigenvectors of these hypermatrices will lead to
improved data analysis.

The software used to produce our figures and numerical results is publicly
available at \url{https://github.com/arbenson/spacey-random-walks}.

\section{The spacey random walk}\label{sec:process}

We now describe the stochastic process, which we call the \emph{spacey random walk}.  This process consists of a sequence of states $\xseq{0}, \xseq{1}, \xseq{2}, \ldots$, and uses the set of transition probabilities from a second-order Markov chain. In a second-order Markov chain, we use $\xseq{n}$ and $\xseq{n-1}$ to \emph{look-up} the appropriate column of transition data based on the last two
states of history.  
In the spacey random walker process on this data, once the process visits $\xseq{n}$, it
\emph{spaces out} and forgets its second last state (that is, the state
$\xseq{n-1}$). It then invents a new history state $\yseq{n}$ by randomly drawing a past state $\xseq{1}, \ldots, \xseq{n}$. Then it transitions to $\xseq{n+1}$ as
a second-order Markov chain as if its last two states were $\xseq{n}$ and
$\yseq{n}$.  We illustrate this process in Figure~\ref{fig:spacey}.

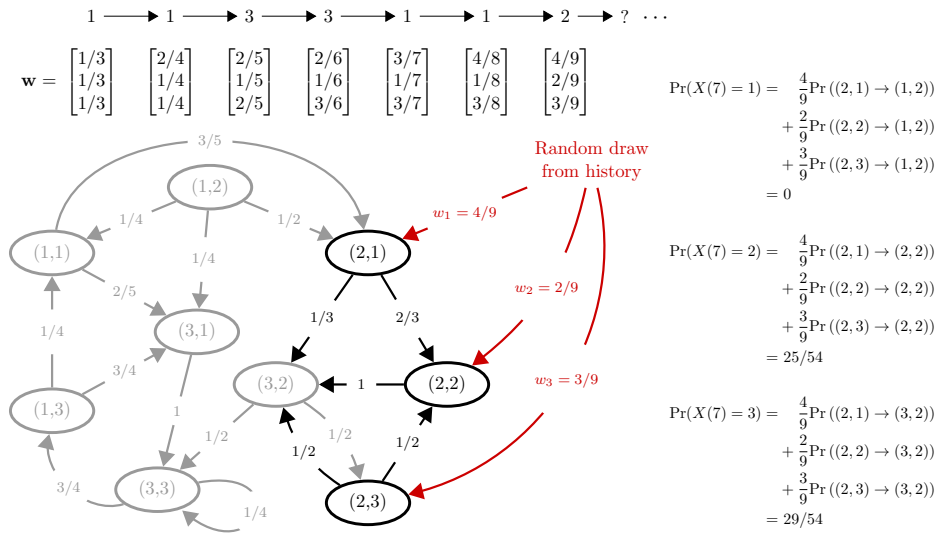
\begin{figure}
\centering
\scalebox{0.7}{
\begin{tikzpicture}

\newcommand{\aypos}{20}
\newcommand{\bypos}{\aypos-1.25}
\newcommand{\xshift}{1.25}
\newcommand{\cdotspace}{\phantom{.}}

\colorlet{MyGrey}{black!40!white}
\colorlet{TufteRed}{red!80!black}

\begin{scope}[every node/.style={ultra thick, align=center}]
    \node (a1)  at (4.5+\xshift,\aypos)  {1};
    \node (a2)  at (6.0+\xshift,\aypos)  {1};
    \node (a3)  at (7.5+\xshift,\aypos)  {3};    
    \node (a4)  at (9.0+\xshift,\aypos)  {3};
    \node (a5)  at (10.5+\xshift,\aypos) {1};
    \node (a6)  at (12.0+\xshift,\aypos) {1};
    \node (a7)  at (13.5+\xshift,\aypos) {2};
    \node (a8)  at (15.0+\xshift,\aypos) {? \cdotspace\large$\cdots$};
\end{scope}

\begin{scope}[every node/.style={above,},
              every edge/.style={draw=black,>=triangle 60}]
    \path [->] (a1) edge node {\scriptsize} (a2);
    \path [->] (a2) edge node {\scriptsize} (a3);
    \path [->] (a3) edge node {\scriptsize} (a4);
    \path [->] (a4) edge node {\scriptsize} (a5);
    \path [->] (a5) edge node {\scriptsize} (a6);
    \path [->] (a6) edge node {\scriptsize} (a7);
    \path [->] (a7) edge node {\scriptsize} (a8);
\end{scope}

\begin{scope}[every node/.style={ultra thick, align=center}]
    \node (w0)  at (3.5+\xshift,\bypos)  {$\ensuremath{\boldsymbol{\mathrm{w}}} =$};
    \node (w1)  at (4.5+\xshift,\bypos)  {$\begin{bmatrix} 1/3  \\ 1/3  \\ 1/3 \end{bmatrix}$};
    \node (w2)  at (6.0+\xshift,\bypos)  {$\begin{bmatrix} 2/4  \\ 1/4  \\ 1/4 \end{bmatrix}$};
    \node (w3)  at (7.5+\xshift,\bypos)  {$\begin{bmatrix} 2/5  \\ 1/5  \\ 2/5 \end{bmatrix}$};    
    \node (w4)  at (9.0+\xshift,\bypos)  {$\begin{bmatrix} 2/6  \\ 1/6  \\ 3/6 \end{bmatrix}$};
    \node (w5)  at (10.5+\xshift,\bypos) {$\begin{bmatrix} 3/7  \\ 1/7  \\ 3/7 \end{bmatrix}$};
    \node (w6)  at (12.0+\xshift,\bypos) {$\begin{bmatrix} 4/8  \\ 1/8  \\ 3/8 \end{bmatrix}$};
    \node (w7)  at (13.5+\xshift,\bypos) {$\begin{bmatrix} 4/9  \\ 2/9  \\ 3/9 \end{bmatrix}$};
\end{scope}

\begin{scope}[every node/.style={ultra thick, draw=black, ellipse, minimum width=45pt,
    align=center}]
    \node (21) at (11,15.5) {(2,1)};
    \node (22) at (12.5,13) {(2,2)};
    \node (23) at (11,10.75) {(2,3)};
\end{scope}

\begin{scope}[every node/.style={ultra thick, color=MyGrey, draw=MyGrey, ellipse, minimum width=45pt,
    align=center}]
    \node (11) at (5,15.5) {(1,1)};
    \node (31) at (7.75,14) {(3,1)};
    \node (12) at (8,16.75) {(1,2)};
    \node (32) at (9.25,13) {(3,2)};
    \node (13) at (5,12.5) {(1,3)};
    \node (33) at (7,11) {(3,3)};        
\end{scope}

\begin{scope}[every node/.style={fill=white,circle},
              every edge/.style={draw=black,very thick,>=triangle 60}]
    \path [->] (21) edge node {\footnotesize $1/3$} (32);
    \path [->] (21) edge node {\footnotesize $2/3$} (22);
    \path [->] (22) edge node {\footnotesize $1$} (32); 
    \path [->] (23) edge node {\footnotesize $1/2$} (22);
    \path [->] (23) edge[bend left=20] node {\footnotesize $1/2$} (32);    
\end{scope}

\begin{scope}[every node/.style={fill=white,circle},
              every edge/.style={color=MyGrey,draw=MyGrey,very thick,>=triangle 60}]
    \path [->] (11) edge node {\footnotesize $2/5$} (31);
    \path [->] (11) edge[bend left=80] node {\footnotesize $3/5$} (21);
    \path [->] (31) edge node {\footnotesize $1$} (33);
    
    \path [->] (12) edge node {\footnotesize $1/4$} (11);
    \path [->] (12) edge node {\footnotesize $1/2$} (21);
    \path [->] (12) edge node {\footnotesize $1/4$} (31);    
    \path [->] (32) edge[bend left=20] node {\footnotesize $1/2$} (23);
    \path [->] (32) edge node {\footnotesize $1/2$} (33);    

    \path [->] (13) edge node {\footnotesize $1/4$} (11);
    \path [->] (13) edge node {\footnotesize $3/4$} (31);
    \path [->] (33) edge[bend left=60] node {\footnotesize $3/4$} (13);
    \path [->] (33) edge[out=10, in=-40, loop] node {\footnotesize $1/4$} (33);    
\end{scope}

\begin{scope}[every node/.style={ultra thick, draw=white, text=TufteRed, minimum width=25pt,
    align=center}]
    \node (s) at (15.25,17.25) {Random draw\\from history};
\end{scope}
\begin{scope}[every node/.style={fill=white,circle},
              every edge/.style={color=TufteRed,draw=TufteRed,very thick,>=triangle 60}]
    \path [->] (s) edge node {\footnotesize $w_1 = 4/9$} (21);
    \path [->] (s) edge[bend left=20] node {\footnotesize $w_2 = 2/9$} (22);
    \path [->] (s) edge[bend left=45] node {\footnotesize $w_3 = 3/9$} (23);
\end{scope}

\node () at (19.25,14.75) {%
\scalebox{0.85}{%
\begin{minipage}{1cm}
\begin{align*}
\text{Pr}(X(7) = 1)
=&\phantom{{}+{}}\frac{4}{9}\text{Pr}\left((2, 1) \to (1, 2)\right) \\
&+\frac{2}{9}\text{Pr}\left((2, 2) \to (1, 2)\right) \\
&+\frac{3}{9}\text{Pr}\left((2, 3) \to (1, 2)\right) \\
=& \mbox{ } 0 \\
\\
\text{Pr}(X(7) = 2)
=&\phantom{{}+{}}\frac{4}{9}\text{Pr}\left((2, 1) \to (2, 2)\right) \\
&+ \frac{2}{9}\text{Pr}\left((2, 2) \to (2, 2)\right) \\
&+ \frac{3}{9}\text{Pr}\left((2, 3) \to (2, 2)\right) \\
=& \mbox{ } 25/54\\
\\
\text{Pr}(X(7) = 3)
=&\phantom{{}+{}}\frac{4}{9}\text{Pr}\left((2, 1) \to (3, 2)\right) \\
&+ \frac{2}{9}\text{Pr}\left((2, 2) \to (3, 2)\right) \\
&+ \frac{3}{9}\text{Pr}\left((2, 3) \to (3, 2)\right) \\
=& \mbox{ } 29/54
\end{align*}
\end{minipage}%
}};
\end{tikzpicture}
}
\vspace{-0.7cm}
\caption{The spacey random walk process uses a set of transition probabilities
	from a second-order Markov chain (which we represent here via the first-order reduced chain; see Figure~\ref{fig:second-order-markov}) and maintains an occupation vector $\vw$ as well. The vector $\vw$ keeps track of the fraction of time spent in each state, initialized
with one visit at each state.  In the next transition, the process chooses the past state
following this vector (red lines) and then makes a transition following the second-order
Markov chain (black lines). In this case, the last state is $2$ and the second last state will be generated from $\vw$, giving the transition probabilities to the next state $\xseq{7}$ on the right.}
\label{fig:spacey}
\end{figure}

We formalize this idea as follows.  Let $\pijk$ be the transition probabilities
of the second-order Markov chain with $N$ states such that
\[ \prob{\xseq{n+1} = i \given \xseq{n} = j, \xseq{n-1}=k} = \pijk. \] 
The probability law of the spacey random surfer is given by
\begin{align}
\prob{\yseq{n}=k \given \mathcal{F}_n} & = \frac{1}{n+N} (1 + \textstyle\sum_{s=1}^n \Indof{\xseq{s} = k})\label{eqn:history_draw} \\
\prob{\xseq{n+1} = i \given \xseq{n} = j, \yseq{n}=k} & = \pijk,\label{eqn:srw_step}
\end{align}
where $\mathcal{F}_n$ is the $\sigma$-field generated by the random variables
$\xseq{i}$, $i = 1, \ldots, n$ and $\xseq{0}$ is a provided starting state.
This system describes a random process with
reinforcement~\cite{pemantle2007survey} and more specifically, a new type of
generalized vertex-reinforced random walk (see
Section~\ref{sec:relationship_vrrw}).  For notational convenience, we let the $N
\times N^2$ column-stochastic matrix $\mR$ denote the flattening of $\cmP$ along
the first index:
\[
\mR := \left[ \begin{array}{c|c|c|c} 
\paak{1} & \paak{2} & \ldots & \paak{N}
\end{array} \right].
\]
(We will use the term \emph{flattening}, but we note that this operation is also referred to as an \emph{unfolding} and
sometimes denoted by $\mR = \cmP_{(1)}$~\cite[Chapter~12.1]{golub2012matrix}.)
We refer to $\paak{i}$ as the $i$th \emph{panel} of $\mR$ and note that each panel
is itself a transition matrix on the original state space.  Transitions of the spacey random walker then
correspond to: (1) selecting the panel from the random variable $Y(n)$ in
Equation~\ref{eqn:history_draw} and (2) following the transition probabilities
in the column of the panel corresponding to the last state.

\subsection{Relationship to vertex-reinforced random walks}
\label{sec:relationship_vrrw}

The spacey random walk is an instance of a (generalized) vertex-reinforced
random walk, a process introduced by \citet{benaim1997vertex}.
A vertex-reinforced random walk is a process that always picks the next
state based on a set of transition probabilities, but those transition
probabilities evolve as the process continues.  A simple example would
be a random walk on a graph where the edge traversals are proportional
to the amount of time spent at each vertex so far~\cite{diaconis1988recent}.
At the start of the process, each vertex is assigned a score of 1, and after
each transition to vertex $i$ from vertex $j$, the score of
vertex $i$ is incremented by 1.  At any point in the process, transitions
from vertex $j$ are made proportional to the score of vertex $j$'s
neighbors in the graph.  As discussed by \citet{diaconis1988recent}, this
process could model how someone navigates a new city, where
familiar locations are more likely to be traversed in the future.

Formally, the vertex-reinforced random walk is a stochastic process with states
$\xseq{i}$, $i = 0, 1, \ldots$ governed by
\begin{align}
&\xseq{0} = x(0) \\
&\vs_i(n) = 1 + \sum_{s=1}^{n}\Indof{\xseq{s} = i}, \quad  \occseq{n} = \frac{\vs(n)}{N + n} \\
&\prob{\xseq{n+1} = i \given \mathcal{F}_n} = \left[\occmat(\occseq{n})\right]_{i, \xseq{n}}
\end{align}
where again $\mathcal{F}_n$ is the $\sigma$-field generated by the random variables
$\xseq{i}$, $i = 1, \ldots, n$, $x(0)$ is the initial state, and
$\occmat(\occseq{n})$ is a $N \times N$ column-stochastic matrix given by a map
\begin{equation}
\occmat\colon \Delta_{N-1} \to \{\mP \in \mathbb{R}^{N \times N} \given \mP_{ij} \ge 0, \; \ve^T\mP = \ve^T \},\quad
\vx \mapsto \occmat(\vx),
\end{equation}
where $\Delta_{N-1}$ is the simplex of length $N$-probability vectors that are non-negative and sum to one. 
In other words, transitions at any point of the walk may depend
on the relative amount of time spent in each state up to that
point.  The vector $\occ$ is called the \emph{occupation vector} because it
represents the empirical distribution of time spent at each state.  However, note that the initial state $\xseq{0}$ does not contribute to the occupation vector.

A key result from \citet{benaim1997vertex} is the relationship between the
discrete vertex-reinforced random walk and the following dynamical system:
\begin{align}\label{eqn:dynamical_system}
\frac{d\vx}{dt} = \pi(\occmat(\vx)) - \vx,
\end{align}
where $\pi$ is the map that sends a transition matrix to its stationary
distribution.  Essentially, the possible limiting distributions of the occupation
vector $\occ$ are described by the long-term dynamics of the system in
Equation~\ref{eqn:dynamical_system} (see Section~\ref{sec:limiting_vrrw} for
details).  Put another way, \emph{convergence of the dynamical system to a fixed point}
is equivalent to \emph{convergence of the occupation vector} for the stochastic
process, and thus, the convergence of the dynamical system implies 
the existence of a stationary distribution.
In order for the dynamical system to make sense, $\occmat(\vx)$ must
have a unique stationary distribution.  We formalize this assumption for the
spacey random walker in Section~\ref{sec:propB}.

\begin{proposition}\label{prop:spacey_vrrw}
The spacey random walk is a vertex-reinforced random walk defined by the map
\[
\vx \mapsto \sum_{k=1}^{N} \paak{k}\vx_k = \mR \cdot (\vx \kron \mI).
\]
\end{proposition}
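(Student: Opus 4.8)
The plan is to show directly that, after averaging out the invented history variable $\yseq{n}$, the transition law of the spacey random walk coincides with the vertex-reinforced transition rule for the map $\occmat(\vx)=\sum_{k=1}^N\paak{k}\vx_k$. First I would record that the conditional law of $\yseq{n}$ in \eqref{eqn:history_draw} is \emph{exactly} the occupation vector $\occseq{n}$ of the would-be vertex-reinforced walk: with $\vs_k(n)=1+\sum_{s=1}^n\Indof{\xseq{s}=k}$ one has $\prob{\yseq{n}=k\given\mathcal{F}_n}=\vs_k(n)/(N+n)=[\occseq{n}]_k$. It is worth noting here that both expressions sum over $\xseq{1},\dots,\xseq{n}$ only, so the initial state $\xseq{0}$ is excluded in both and there is no off-by-one mismatch between the two formulations.

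Next I would apply the tower property over the $\sigma$-field $\sigma(\mathcal{F}_n,\yseq{n})$:
\[
\prob{\xseq{n+1}=i\given\mathcal{F}_n}
=\sum_{k=1}^N \prob{\xseq{n+1}=i\given\mathcal{F}_n,\,\yseq{n}=k}\,\prob{\yseq{n}=k\given\mathcal{F}_n}.
\]
The second factor is $[\occseq{n}]_k$ by the previous step, and the first factor equals $\cmP_{i\,\xseq{n}\,k}=[\paak{k}]_{i,\xseq{n}}$ because, once $\yseq{n}$ is revealed, the process moves as a second-order Markov chain whose last two states are $\xseq{n}$ and $\yseq{n}$ (this is \eqref{eqn:srw_step}, now conditioned on the larger $\sigma$-field; we use that $\xseq{n}$ is $\mathcal{F}_n$-measurable, so the answer depends on $\mathcal{F}_n$ only through $\xseq{n}$). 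Substituting gives
\[
\prob{\xseq{n+1}=i\given\mathcal{F}_n}
=\sum_{k=1}^N [\paak{k}]_{i,\xseq{n}}\,[\occseq{n}]_k
=\Bigl[\,\sum_{k=1}^N\paak{k}\,[\occseq{n}]_k\,\Bigr]_{i,\xseq{n}}
=\bigl[\occmat(\occseq{n})\bigr]_{i,\xseq{n}},
\]
which is precisely the defining identity of a vertex-reinforced random walk for the stated map.

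It then remains to verify the two routine facts needed to make this a legitimate instance in the sense of Section~\ref{sec:relationship_vrrw}. For $\occmat$ to be a well-defined map $\Delta_{N-1}\to\{\mP : \mP_{ij}\ge 0,\ \ve^T\mP=\ve^T\}$, observe that each panel $\paak{k}$ is itself column-stochastic (since $\sum_i\pijk=1$ for all $j,k$), so any convex combination $\sum_k\paak{k}\vx_k$ with $\vx\in\Delta_{N-1}$ is again nonnegative with unit column sums. For the closed form $\mR\cdot(\vx\kron\mI)$, partition $\mR=[\,\paak{1}\mid\cdots\mid\paak{N}\,]$ into $N$ panels of $N$ columns and write $\vx\kron\mI$ as the block column whose $k$th block is $\vx_k\mI$; block multiplication yields $\mR\cdot(\vx\kron\mI)=\sum_k\paak{k}(\vx_k\mI)=\sum_k\vx_k\paak{k}$, as claimed.

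I do not anticipate a genuine obstacle. The one step warranting a word of care is the promotion of \eqref{eqn:srw_step}, as written for the pair $(\xseq{n},\yseq{n})$, to conditioning on $\sigma(\mathcal{F}_n,\yseq{n})$ — this is just the definition of the process (it is Markovian in the augmented state once the history draw is fixed). I would also point out explicitly that the \emph{uniqueness} of the stationary distribution of $\occmat(\vx)$, which is what the dynamical system \eqref{eqn:dynamical_system} requires but is not needed for this proposition, is handled separately in Section~\ref{sec:propB}.
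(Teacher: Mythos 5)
Your proposal is correct and follows essentially the same route as the paper: condition on the invented history state $\yseq{n}$, use the law of total probability to average the panels $\paak{k}$ against the occupation vector, and observe that the resulting convex combination of column-stochastic panels is again column-stochastic. The extra details you supply (the exact match between the law of $\yseq{n}$ and $\occseq{n}$, and the block-multiplication identity $\mR\cdot(\vx\kron\mI)=\sum_k\vx_k\paak{k}$) are left implicit in the paper but change nothing of substance.
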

\begin{proof}
Given $\yseq{n} = k$, the walker transitions from $\xseq{n} = j$ to $\xseq{n +
  1} = i$ with probability $\pijk$.  Conditioning on $\yseq{n}$,
\begin{align}
& \prob{\xseq{n+1} = i \given \xseq{0}, \ldots, \xseq{n}, \occseq{n}} \nonumber \\
&\qquad = \sum_{k=1}^{N} \prob{\xseq{n+1} = i \given \xseq{n}, \yseq{n}=k}\prob{\yseq{n} = k \given \occseq{n}} \nonumber \\
&\qquad = \sum_{k=1}^N\cmP_{i,\xseq{n},k}\vw_k(n) = \left[\mR \cdot (\occseq{n} \kron \mI)\right]_{i, \xseq{n}}. \nonumber
\end{align}
Hence, given $\occseq{n}$, transitions are based on the matrix
$\sum_{k=1}^{N}\paak{k}\occ_k(n)$.  Since $\occseq{n} \in \Delta_{N-1}$ and each
$\paak{k}$ is a transition matrix, this convex combination is also a transition
matrix.
\end{proof}

The set of stationary distributions for the spacey random walk are the
fixed points of the dynamical system in Equation~\ref{eqn:dynamical_system} 
using the correspondence to the spacey random walk map. These
are vectors $\vx \in \Delta_{N-1}$ for which
\begin{align}\label{eqn:stationary}
0 = \frac{d\vx}{dt} = \pi( \mR \cdot (\vx \kron I) ) - \vx 
\iff \mR \cdot (\vx \kron \vx) = \vx 
\iff \vx_i = \sum_{ij} \pijk \vx_j \vx_k 
\end{align}
In other words, fixed points of the dynamical system are the $z$ eigenvectors of $\cmP$ that
are nonnegative and sum to one, i.e., satisfy Equation~\ref{eqn:ling_stationary}.

\subsection{Intuition for stationary distributions}
\label{sec:intuition}
We now derive some intuition for why Equation~\ref{eqn:ling_stationary} must be
satisfied for any stationary distribution of a spacey random walk without
concerning ourselves with the formal limits and precise arguments, which are
provided in Section~\ref{sec:limiting_vrrw}.  Let $\occseq{n}$ be the occupation
vector at step $n$.  Consider the behavior of the spacey random process at some
time $n \gg 1$ and some time $n+L$ where $n \gg L \gg 1$. The idea is to
approximate what will happen if we ran the process for an extremely long time
and then look at what changed at some large distance in the future. Since $L \gg
1$ but $L \ll n$, the vector $\occseq{L+n} \approx \occseq{n}$, and thus, the
spacey random walker $\{\xseq{n}\}$ approximates a Markov chain with transition
matrix:
\[ \mM(\occseq{n}) = \mR \cdot (\occseq{n} \kron \mI) = \sum_{k} \pijk \occ_k(n). \]

Suppose that $\mM(\occseq{n})$ has a unique stationary distribution $\vx(n)$
satisfying $\mM(\occseq{n}) \vx(n) = \vx(n)$.  Then, if the process $\{ \xseq{n}
\}$ has a limiting distribution, we must have $\vx(n) = \occseq{n + L}$,
otherwise, the distribution $\vx(n)$ will cause $\occseq{n + L}$ to
change. Thus, the limiting distribution $\vx$ heuristically satisfies:
\begin{align}
\vx = \mM(\vx) \vx = \mR \cdot (\vx \kron \mI) \vx = \mR \cdot (\vx \kron \vx).\label{eqn:limit}
\end{align}
Based on this heuristic argument, then, we expect stationary distributions of
spacey random walks to satisfy the system of polynomial equations
\begin{align}\label{eqn:limit_poly}
\vx_i = \sum_{1 \le j, k \le N} \pijk \vx_j \vx_k.
 \end{align}
In other words, $\vx$ is a $z$ eigenvector of $\cmP$.  Furthermore, $\vx$ is the
Perron vector of $\mM(\occseq{n})$, so it satisfies
Equation~\ref{eqn:ling_stationary}.

Pemantle further develops these heuristics by considering the change to
$\occseq{n}$ induced by $\vx(n)$ in a continuous time
limit~\cite{pemantle1992vertex}.  To do this, note that, for the case $n \gg L
\gg 1$,
\[ \occseq{n + L} \approx \frac{n \occseq{n} + L \vx(n)}{n+L} = \occseq{n}
+ \frac{L}{n+L} (\vx(n) - \occseq{n}). \]
Thus, in a continuous time limit $L \to 0$ we have: 
\[ \frac{d \occseq{n}}{dL} \approx
 \lim_{L\to0} \frac{\occseq{n+L} - \occseq{n}}{L} = \frac{1}{n} (\vx(n) -
 \occseq{n}). \] Again, we arise at the condition that, if this process
 converges, it must converge to a point where $\vx(n) = \occseq{n}$.

\subsection{Generalizations}
\label{sec:generalization}
All of our notions generalize to higher-order Markov chains beyond
second-order.  Consider an order-$m$ hypermatrix $\cmP$ representing an
$(m-1)$th-order Markov chain.  The spacey random walk corresponds to the
following process:
\begin{enumerate}
\item The walker is at node $i$, spaces out, and forgets the last $m - 2$ states.
         It chooses the last $m - 2$ states $j_1, \ldots, j_{m-2}$ at random based on its history
         (Each state is drawn i.i.d.~from the occupation vector $\occ$).
\item The walker then transitions to node $i$ with probability $\cmP_{i j_1 \cdots j_{m-2}}$,
         $1 \le i \le N$.
\end{enumerate}
Analogously to Proposition~\ref{prop:spacey_vrrw}, the corresponding
vertex-reinforced random walk map is
\[
\vx \mapsto \mR \cdot ( \fullkron{\vx}{m-2} \kron \mI ),
\]
where $\mR$ is the $N \times N^{m-1}$ column-stochastic flattening of $\cmP$
along the first index.

One natural generalization to the spacey random walker is the \emph{spacey
  random surfer}, following the model in the seminal paper
by~\citet{page1999pagerank}.  In this case, the spacey random surfer follows the
spacey random walker model with probability $\alpha$ and teleports to a random
state with probability $(1 - \alpha)$; formally,
\[
\prob{\xseq{n+1} = i \given \xseq{n} = j, \yseq{n}=k} = \alpha \pijk + (1 - \alpha)\prtel_i,
\]
where $\prtel$ is the (stochastic) teleportation vector.  We will use this model
to refine our analysis.  Note that the spacey random surfer model is an instance
of the spacey random walker model with transition probabilities $\alpha\pijk +
(1 - \alpha)\prtel_i$. This case was studied more extensively in~\citet{gleich2015multilinear}.

\subsection{Property B for spacey random walks}
\label{sec:propB}

Recall that in order to use the theory from \citet{benaim1997vertex} and the relationship between the stochastic process and the dynamical system, we must have that that $\occmat(\vx)$ has a unique stationary distribution. We formalize this requirement: 
\begin{definition}[{{Property B}}]\label{def:prop_b}
We say that a spacey random walk transition
hypermatrix $\cmP$ satisfies Property B if the corresponding vertex-reinforced random
walk matrix $\mM(\vw)$ has a unique Perron vector when $\vw$ is on the interior
of the probability simplex $\Delta_{N-1}$.
\end{definition}
This property is trivially satisfied if $\cmP$ is strictly positive. 
We can use standard properties of Markov chains to generalize the case when Property B holds.
\begin{theorem}\label{thm:prop_b_sat}
 A spacey random walk transition hypermatrix $\cmP$ satisfies Property B if and only if $\mM(\vw)$ has a single recurrent class for some strictly positive vector $\vw > 0$. 
\end{theorem}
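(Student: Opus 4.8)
The plan is to reduce the claim to two standard facts about finite Markov chains together with one elementary observation about the combinatorial structure of $\mM(\vw)$. First I would recall the textbook characterization of uniqueness of a stationary distribution: a column-stochastic matrix $\mP$ has a unique Perron vector (equivalently, the eigenvalue $1$ is algebraically simple, equivalently it has a unique stationary distribution) if and only if the directed graph on $\{1,\dots,N\}$ that places an arc $j\to i$ whenever $\mP_{ij}>0$ has exactly one closed communicating (recurrent) class; more precisely, the number of recurrent classes equals the dimension of the $1$-eigenspace. This is immediate from Perron--Frobenius applied to the block-triangular form induced by the communicating classes, and can be cited from \cite{kemeny1960finite,stewart1994introduction}. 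I will fix "recurrent class" to mean a closed communicating class in this sense so that the counting statement is unambiguous.

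The key observation is that the \emph{support} (zero/nonzero pattern) of $\mM(\vw)$ is independent of which strictly positive $\vw$ is chosen. By Proposition~\ref{prop:spacey_vrrw}, $\mM(\vw)=\mR\cdot(\vw\kron\mI)=\sum_{k=1}^N\paak{k}\vw_k$, so $[\mM(\vw)]_{ij}=\sum_{k=1}^N \cmP_{ijk}\vw_k$ is a nonnegative combination of the entries $\cmP_{ijk}$ with all coefficients strictly positive; hence $[\mM(\vw)]_{ij}>0$ precisely when $\cmP_{ijk}>0$ for some $k$, a condition involving $\cmP$ alone. Consequently every matrix in the family $\{\mM(\vw):\vw>0\}$ has the same associated directed graph and therefore the same recurrent-class structure. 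I would also note that the interior of $\Delta_{N-1}$ is exactly the set of strictly positive probability vectors, so Property B is precisely the assertion that $\mM(\vw)$ has a unique Perron vector for \emph{every} $\vw>0$.

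Combining these gives the theorem in a short chain of implications. If $\mM(\vw_0)$ has a single recurrent class for some $\vw_0>0$, the support-invariance observation upgrades this to: $\mM(\vw)$ has a single recurrent class for \emph{all} $\vw>0$; then the characterization of the first step yields a unique Perron vector for every $\vw>0$, i.e.\ Property B. Conversely, Property B directly furnishes such a $\vw_0$---take any interior point of $\Delta_{N-1}$---and applying the first step in the other direction shows $\mM(\vw_0)$ has exactly one recurrent class. I do not anticipate a real obstacle; the only points needing care are (i) invoking Proposition~\ref{prop:spacey_vrrw} so that $\mM(\vw)$ is genuinely column-stochastic and the Markov-chain language applies, and (ii) being precise that, for a fixed stochastic matrix, "unique Perron vector", "unique stationary distribution", and "single recurrent class" are equivalent. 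The entire content is the recognition that Property B is a purely combinatorial property of the zero pattern of $\cmP$.
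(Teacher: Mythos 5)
Your proposal is correct and follows essentially the same route as the paper: both arguments rest on the standard equivalence between a unique Perron vector and a single recurrent (closed communicating) class, combined with the observation that the zero pattern of $\mM(\vw)=\sum_k\paak{k}\vw_k$ is the same for every strictly positive $\vw$ (the union of the panel graphs), so the recurrent-class structure need only be checked at one such $\vw$. Your write-up simply makes explicit the supporting facts that the paper's proof states more tersely.
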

\begin{proof} 
Property B requires a unique stationary distribution for each $\vw$ on the interior of the probability simplex. This is equivalent to requiring a single recurrent class in the matrix $\mM(\vw)$. 
Now, the property of having a single recurrent class is determined by the \emph{graph of the non-zero elements} of $\mM(\vw)$ alone and does not depend on their value. For any vector $\vw$ on the interior of the simplex, and also any positive vector $\vw$, the resulting graph structure of $\mM(\vw)$ is then the same. This graph structure is given by the \emph{union} of graphs formed by each panel of $\mR$ that results when $\cmP$ is flattened along the first index. Consequently, it suffices to test any strictly positive $\vw > 0$ and test the resulting graph for a single recurrent class. 
\end{proof}

\section{Applications}
\label{sec:applications}

We now propose several applications of the spacey random walk and show how some
existing models are instances of spacey random walks.  We remind the reader that
in all of these applications, the entries of the hypermatrix $\cmP$ comes from a higher-order Markov chain, but the spacey random walk describes the dynamics of the process.

\subsection{Population genetics}

In evolutionary biology, population genetics is the study of the dynamics of the
distribution of alleles (types).  Suppose we have a population with $n$ types.
We can describe the passing of type in a mating process by a hypermatrix of
probabilities:
\[
\prob{\text{child is type $i$} \given \text{parents are types $j$ and $k$}} = \pijk.
\]
The process exhibits a natural symmetry $\pijk = \cmP_{ikj}$ because the order
of the parent types does not matter.  The spacey random walk traces the lineages
of a type in the population with a random mating process:
\begin{enumerate}
\item A parent of type $j$ randomly chooses a mate of type $k$ from the
         population, whose types are distributed from the occupation vector $\occ$.
\item The parents create a child of type $i$ with probability $\pijk$.
\item The process repeats with the child becoming the parent.
\end{enumerate}

A stationary distribution $\vx$ for the type distribution satisfies the
polynomial equations $\vx_{i} = \sum_{k}\pijk\vx_{j}\vx_{k}$.  In population
genetics, this is known as the Hardy-Weinberg
equilibrium~\cite{hartl1997principles}.  Furthermore, the dynamics of the
spacey random walk describe how the distribution of types evolves over
time.  Our results offer new insights into how these population distributions
evolve as well as how one might learn $\cmP$ from data.


\subsection{Transportation}\label{sec:transportation}

We consider the process of taxis driving passengers around to a set of
locations.  Here, second-order information can provide significant information.
For example, it is common for passengers to make length-2 cycles: to and from
the airport, to and from dinner, etc.  Suppose we have a population of
passengers whose base (home) location is from a set $\{1, \ldots, N\}$ of
locations.  Then we can model a taxi's process as follows:
\begin{enumerate}
\item A passenger with base location $k$ is drawn at random.
\item The taxi picks up the passenger at location $j$.
\item The taxi drives the passenger to location $i$ with probability $\pijk$.
\end{enumerate}

We can model the distribution of base locations empirically, i.e., the
probability of a passenger having base location $k$ is simply relative to the
number of times that the taxi visits location $k$.  In this model, the
stochastic process of the taxi locations follows a spacey random walk.  We
explore this application further in Section~\ref{sec:taxis}.

\subsection{Ranking and clustering}

In data mining and information retrieval, a fundamental problem is ranking a set
of items by relevance, popularity, importance,
etc.~\cite{manning2008introduction}.  Recent work
by~\citet{gleich2015multilinear} extends the classical PageRank
method~\cite{page1999pagerank} to ``multilinear PageRank,'' where the stationary
distribution is the solution to the $z$ eigenvalue problem $\vx = \alpha\cmR(\vx
\kron \vx) + (1 - \alpha)\prtel$ for some teleportation vector $\prtel$.  This
is the spacey random surfer process discussed in Section~\ref{sec:generalization}.
As discussed in Section~\ref{sec:computation}, the multilinear PageRank vector
corresponds to the fraction of time spent at each node in the spacey random
surfer model.  In related work,~\citet{mei2010divrank} use the stationary
distribution of Pemantle's vertex-reinforced random walk to improve rankings.

Another fundamental problem in data mining is network clustering, i.e.,
partitioning the nodes of a graph into clusters of similar nodes. The
definition of ``similar" varies widely in the
literature~\cite{schaeffer2007graph,fortunato2010community}, but almost all of
the definitions involve first-order (edge-based) Markovian properties.  Drawing
on connections between random walks and clustering, \citet{benson2015tensor}
used the multilinear PageRank vector $\vx$ to partition the graph based on
\emph{motifs}, i.e., patterns on several nodes~\cite{alon2007network}.
In this application, the hypermatrix $\cmP$ encodes transitions on motifs.

\subsection{P\'olya urn processes}\label{sec:urn}

Our spacey random walk model is a generalization of a P\'olya urn process, which
we illustrate with a simple example.  Consider an urn with red and green balls.
At each step, we (1) draw a random ball from the urn, (2) put the randomly drawn
ball back in, and (3) put another ball of the same color into the urn.  Here, we
consider the sequence of states $\xseq{n}$ to be the color of the ball put in
the urn in step (3).  The transitions are summarized as follows.

\begin{center}
\begin{tabular}{l c c}
\toprule
Last ball selected & \multicolumn{2}{l}{Randomly drawn ball} \\
& Red & Green \\
\midrule
Red & Red & Green \\
Green & Red & Green \\
\bottomrule
\end{tabular}
\end{center}
In this urn model, the spacey random walk transition probabilities are
independent of the last state, i.e.,
\begin{equation}\label{eqn:urn_indep}
\prob{\xseq{n+1} = i \given \xseq{n} = j, \yseq{n}=k} = \prob{\xseq{n+1} = i \given \yseq{n}=k}.
\end{equation}
Nonetheless, this is still a spacey random walk where the transition hypermatrix is
given by the following flattening:
\begin{align}
 \mR = \left[ \begin{array}{cc|cc} 
1 & 1 & 0 & 0 \\
0 & 0 & 1 & 1
\end{array} \right].
\label{eqn:polya_example}
\end{align}
By Equation~\ref{eqn:urn_indep}, each panel (representing the randomly drawn
ball) has a single row of ones.

We can generalize to more exotic urn models.  Consider an urn with red and green
balls.  At each step, we (1) draw a sequence of balls $b_1, \dots, b_m$, (2)
put the balls back in, and (3) put a new ball of color $C(b_1, \dots, b_m) \in
\{\text{red}, \text{green}\}$ into the urn.  This process can be represented by
a $2 \times 2^m$ flattening $\mR$, where the panels are indexed by $(b_1,
\ldots, b_m)$:
 \[
 \mR_{b_1, \ldots, b_m} =
 \left[ \begin{array}{ll} 
\Indof{C(b_1, \ldots, b_m) = \text{red}} & \Indof{C(b_1, \ldots, b_m) = \text{red}}  \\
\Indof{C(b_1, \ldots, b_m) = \text{green}} & \Indof{C(b_1, \ldots, b_m) = \text{green}} \\
\end{array}
\right].
\]

Furthermore, the spacey random walk also describes urn models
with probabilistic choices for the color.  For example, suppose we
draw a sequence of balls $b_1, \dots b_m$ and add a new red ball into
the urn with probability $p(b_1, \ldots, b_m)$ and a green ball with
probability $1 - p(b_1, \ldots, b_m)$.  Then, the panels of $\mR$ would be
 \[
 \mR_{b_1, \ldots, b_m} =
 \left[ \begin{array}{ll} 
p(b_1, \ldots, b_m)      & p(b_1, \ldots, b_m) \\
1 - p(b_1, \ldots, b_m) & 1 - p(b_1, \ldots, b_m) \\
\end{array}
\right].
\]

Although these two-color urn processes can be quite complicated, our analysis in
Section~\ref{sec:two_states} shows that, apart from easily identifiable corner
cases, the dynamics of the system always converge to a stable equilibrium point.
In other words, the fraction of red balls in the urn converges.

\section{Dynamics with two states}
\label{sec:two_states}

We now completely characterize the spacey random walk for the simple case where
there are only two states.  This characterization covers the P\'olya urn process
described in Section~\ref{sec:urn}.

\subsection{The general two-state model}\label{sec:two_state}

In two-state models, the probability distribution over states is determined by
the probability of being in any one of the two states.  This greatly simplifies
the dynamics.  In fact, we show in Theorem~\ref{thm:stable_point_2x2} that in
all cases, the dynamical system describing the trajectory of the spacey random
walker converges to a stable stationary point.

Consider the case of an order-$m$ hypermatrix $\cmP$ with each dimension equal
to two. Let $\mR$ be the flattening of $\cmP$ along the first index.  We define the
map $\vz$ that sends a single probability to the probability simplex on two points:
\[
\vz\colon [0, 1] \to \Delta_1,\quad
x \mapsto \begin{bmatrix} x \\ 1 - x \end{bmatrix}.
\]
\begin{remark}
 Since the identity is the only $2 \times 2$ stochastic matrix
without a unique Perron vector, Property $B$ (Definition~\ref{def:prop_b}) can be reduced to
 \[
x \in (0, 1) \rightarrow \mR \cdot ( \fullkron{\vz(x)}{m-2} \kron \mI ) \neq \mI
\]
in the case of a two-state spacey random walk.
\end{remark}

We have a closed form for the map $\pi$ that sends a $2
\times 2$ stochastic matrix, with a unique Perron vector, to the first
coordinate of its Perron vector. (As noted in the remark,
a $2 \times 2$ stochastic matrix has
a unique Perron vector if it is not the identity, so $\pi$ applies to all $2
\times 2$ stochastic matrices that are not the identity matrix.)  Specifically,
\[
\pi\left(\begin{bmatrix} p & 1 - q \\ 1 - p & q \end{bmatrix} \right) = \vz\left(\frac{1 - q}{2 - p - q}\right).
\]
The corresponding dynamical system on the first coordinate is then
\begin{equation}\label{eqn:srw_dynamical_system}
\frac{dx}{dt} = \bigl[\pi\bigl( \mR \cdot ( \fullkron{\vz(x)}{m-2} \kron \mI ) \bigr)\bigr]_1 - x.
\end{equation}
When the dependence on $\mR$ is clear from context, we will write this dynamical
as $dx / dt = f(x)$ to reduce the notation in the statements of our theorems.

Figure~\ref{fig:srs_dynamics} shows the dynamics for the following flattening
of a fourth-order hypermatrix:
\[
\mR = \left[ \begin{array}{cc|cc|cc|cc} 
0.925 & 0.925 & 0.925  &  0.075  & 0.925  &  0.075 & 0.075 & 0.075 \\
0.075 & 0.075 &  0.075 &   0.925 & 0.075  & 0.925  & 0.925 & 0.925
\end{array} \right].
\]
The system has three equilibria points, i.e., there are three values of
$x$ such that $f(x) = 0$ on the interval $[0, 1]$.  Two of the points are stable
in the sense that small enough perturbations to the equilibrium value will
always result in the system returning to the same point.

\begin{figure}[t]
\centering
\includegraphics[scale=0.35]{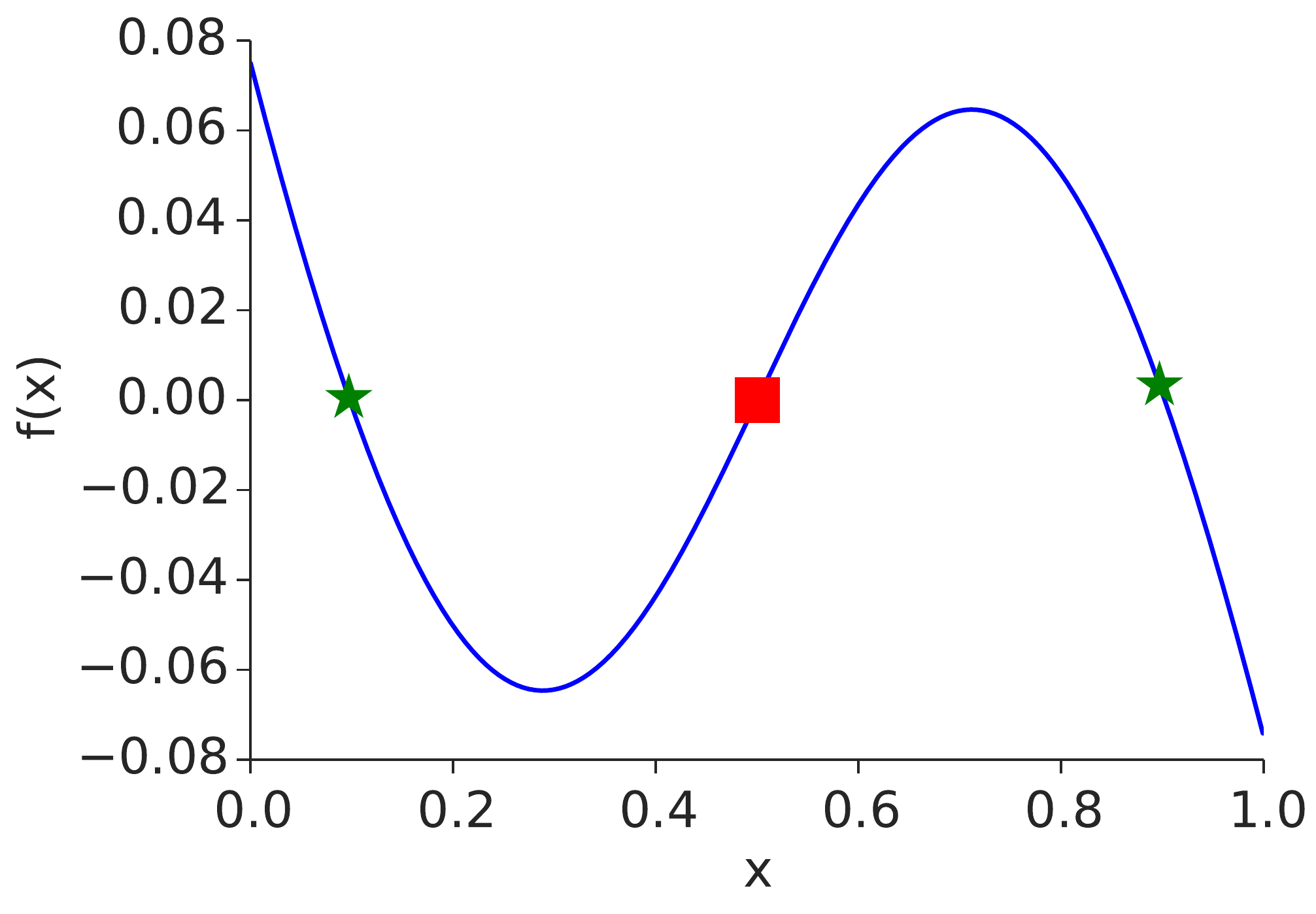}
\vspace{-0.4cm}
\caption{ The dynamics of the spacey random walk for a $2 \times 2 \times 2
  \times 2$ hypermatrix shows that the solution has three equilibrium points
  (marked where $f(x) = 0$) and two stable points (the green stars).  }
\label{fig:srs_dynamics}
\end{figure}

\subsection{Existence of stable equilibria}

The behavior in Figure~\ref{fig:srs_dynamics} shows that $f(0)$ is positive and
$f(1)$ is negative. This implies the existence of at least one equilibrium point
on the interior of the region (and in the figure, there are three). We now show
that these boundary conditions always hold.

\begin{lemma}\label{lem:boundary_conds}
Consider an order-$m$ hypermatrix $\cmP$ with $N=2$ that satisfies Property
B. The forcing function $f(x)$ for the dynamical system has the following
properties:
\begin{compactenum}[(i)]
\item $f(0) \ge 0$;
\item $f(1) \le 0$.
\end{compactenum}
\end{lemma}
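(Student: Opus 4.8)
The plan is to evaluate the forcing function $f$ at the two boundary values $x=0$ and $x=1$ directly, using the closed form for $\pi$ on $2\times 2$ stochastic matrices. Recall that $f(x) = [\pi(\mR\cdot(\fullkron{\vz(x)}{m-2}\kron\mI))]_1 - x$, so at $x=0$ we have $f(0) = [\pi(\mR\cdot(\fullkron{\vz(0)}{m-2}\kron\mI))]_1 - 0$, which is just the first coordinate of a Perron vector of a $2\times 2$ stochastic matrix; since Perron vectors lie in $\Delta_1$, this first coordinate is automatically in $[0,1]$, hence $\ge 0$. That immediately gives (i). Symmetrically, $f(1) = [\pi(\mR\cdot(\fullkron{\vz(1)}{m-2}\kron\mI))]_1 - 1 \le 1 - 1 = 0$, giving (ii). So at this level of generality the lemma is almost trivial: it only uses that a probability on two states lies in $[0,1]$ together with the normalization subtracted off at the endpoints.

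First I would make sure Property B is invoked correctly: it guarantees that $\mR\cdot(\fullkron{\vz(x)}{m-2}\kron\mI)$ is not the identity for $x\in(0,1)$, so that $\pi$ is well-defined there; at the endpoints $x=0,1$ one should check that the relevant matrix still admits a (selected) Perron vector, or simply note that $f$ extends continuously to $[0,1]$ and that the endpoint values are limits of interior values, each of which has first coordinate in $[0,1]$. This continuity-from-the-interior argument sidesteps any worry about the endpoint matrix being the identity. I would state this as a one-line remark rather than belabor it.

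The only mild subtlety — and the one place I would actually write something — is whether we should extract more structure at the boundary, e.g. identifying exactly which panels of $\mR$ get selected when $x=0$ or $x=1$. Writing $\vz(0) = \begin{bmatrix}0\\1\end{bmatrix}$ and $\vz(1)=\begin{bmatrix}1\\0\end{bmatrix}$, the Kronecker power $\fullkron{\vz(0)}{m-2}$ is the indicator of the panel indexed by $(2,2,\dots,2)$, and $\fullkron{\vz(1)}{m-2}$ is the indicator of the panel $(1,1,\dots,1)$; so $\mR\cdot(\fullkron{\vz(0)}{m-2}\kron\mI)$ is literally a single panel $\paak{k}$ of $\mR$, which is a genuine $2\times 2$ stochastic matrix. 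Its Perron vector is some $\vz(c)$ with $c\in[0,1]$, so $f(0)=c\ge 0$; and at $x=1$ the selected panel's Perron first coordinate $c'\in[0,1]$ gives $f(1)=c'-1\le 0$. I would present the proof in this concrete form, since it makes the claim transparent and dovetails with the examples in Section~\ref{sec:urn}.

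I expect no real obstacle here; the ``hard part,'' such as it is, is purely bookkeeping — being careful that $\fullkron{\vz(x)}{m-2}\kron\mI$ selects a column-stochastic $2\times 2$ matrix, and that the $\pi$ formula's output, being an element of $\Delta_1$, has first coordinate confined to $[0,1]$. The substantive work (that these boundary conditions force an interior equilibrium, and the count/stability of such equilibria) is deferred to the subsequent theorem, so this lemma should be a short two- or three-sentence argument.
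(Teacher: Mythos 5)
Your main argument is the same as the paper's for the generic case: at $x=0$ the matrix $\mR\cdot\bigl(\fullkron{\vz(0)}{m-2}\kron\mI\bigr)$ collapses to the single panel indexed by $(2,\dots,2)$, its Perron vector lies in $\Delta_1$, so $f(0)\ge 0$, and symmetrically $f(1)\le 0$. That part is correct, and your identification of which panel gets selected matches the paper exactly.

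The gap is in the degenerate case, which is the entire content of the paper's proof and which you dismiss as a one-line remark. Property B only constrains $\mM(\vw)$ for $\vw$ in the \emph{interior} of the simplex, so nothing prevents the boundary panel from being the identity matrix, in which case $\pi$ is undefined at $x=0$ and your ``first coordinate of a Perron vector'' step has no referent. Your fallback --- ``simply note that $f$ extends continuously to $[0,1]$'' --- asserts precisely the thing that needs proof: writing $\mM(\vz(x))=\begin{bmatrix} p(x) & 1-q(x)\\ 1-p(x) & q(x)\end{bmatrix}$, the closed form $[\pi(\cdot)]_1=\frac{1-q(x)}{2-p(x)-q(x)}$ is of the form $0/0$ at $x=0$ when the boundary panel is the identity, so the existence of the limit is not automatic from the formula. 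The paper resolves this by computing the limit explicitly: near $x=0$ the matrix is a perturbation $(1-\epsilon)\mI+\epsilon\begin{bmatrix} a & b\\ 1-a & 1-b\end{bmatrix}$ (up to higher order), whose $\pi$-value is $\frac{b}{1-a+b}$, well defined because Property B forces $(a,b)\neq(1,0)$; this is where Property B actually does work in the proof, whereas in your write-up it is only used to make $\pi$ well defined on the interior. Your continuity route can be completed --- $[\pi(\mM(\vz(x)))]_1$ is a rational function of $x$ taking values in $[0,1]$ on $(0,1)$, hence has a removable singularity at each endpoint with limit in $[0,1]$ --- but that observation (or the paper's explicit computation) must appear in the proof; as written, the one nontrivial point of the lemma is asserted rather than argued.
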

\begin{proof}
Write the flattening of $\cmP$ as $\mR$ with panels $\mR_1, \ldots, \mR_{M}$ for
$M = 2^{m-2}$.  For part (i), when $x = 0$, $\vz(x) = \bmat{0 & 1}^T$ and
\[
\mR \cdot \bigl( \fullkron{\vz(x)}{m-2} \kron \mI \bigr) = \mR_{M} = \bmat{p & 1-q \\ 1-p & q },
\]
for some $p$ and $q$.  If $p$ and $q$ are both not equal to $1$, then $f(0) \ge
0$.  If $p = q = 1$, then we define $f(x)$ via its limit as $x \to 0$.  We have
\begin{align}
f(\epsilon) 
&= \left[
\pi\left(
(1 - \epsilon) \begin{bmatrix} 1 & 0 \\ 0 & 1 \end{bmatrix}+ \epsilon\begin{bmatrix} a & b \\ 1 - a & 1 - b \end{bmatrix}
\right)
\right]_1
 - \epsilon
 = \frac{b}{1 - a + b} - \epsilon. \nonumber
\end{align}
By Property B, $(a, b) \neq (1, 0)$ for any $\epsilon$, so $\lim_{\epsilon \to
  0} f(\epsilon) \ge 0$.

For part (ii), we have the same argument except, given that we are subtracting
$1$, the result is always non-positive.
\end{proof}

We now show that the dynamical system for the spacey random walker with two states
always has a stable equilibrium point.

\begin{theorem}\label{thm:stable_point_2x2}
Consider a hypermatrix $\cmP$ that satisfies property $B$ with $N=2$.  The
dynamical system given by Equation~\ref{eqn:dynamical_system} always converges
to a Lyapunov stable point starting from any $x \in (0,1)$.
\end{theorem}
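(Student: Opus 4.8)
The plan is to use the fact that for $N=2$ the system in Equation~\ref{eqn:dynamical_system} collapses to the scalar autonomous ODE $dx/dt = f(x)$ on $[0,1]$, where scalar dynamics are rigid. First I would record the regularity of $f$. Writing the $2\times2$ stochastic matrix $\mR\cdot(\fullkron{\vz(x)}{m-2}\kron\mI)$ as $\bmat{p(x) & 1-q(x) \\ 1-p(x) & q(x)}$, its entries are polynomials in $x$ (they are the multilinear contractions $\sum \cmP_{ij k_1\cdots k_{m-2}}\prod_s \vz(x)_{k_s}$, and each $\vz(x)_k\in\{x,1-x\}$), and Property~B says this matrix is never the identity on $(0,1)$, so $h(x):=2-p(x)-q(x)>0$ there and
\[
f(x)=\frac{1-q(x)}{2-p(x)-q(x)}-x=\frac{g(x)}{h(x)},\qquad g\text{ a polynomial of degree }\le m-1 .
\]
Thus $f$ is real-analytic on $(0,1)$, $\operatorname{sign}f=\operatorname{sign}g$ there, and by the limiting argument already used in the proof of Lemma~\ref{lem:boundary_conds} it extends continuously to $[0,1]$ with $f(0)\ge0$ and $f(1)\le0$.

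Next I would establish convergence to a fixed point. Because $f(0)\ge0$ and $f(1)\le0$, the vector field points weakly inward at both endpoints, so $[0,1]$ is forward-invariant; since $f$ is locally Lipschitz on $(0,1)$, solutions are unique and globally defined for $t\ge0$. A scalar autonomous flow is monotone (the sign of $\dot x=f(x(t))$ cannot flip without $x$ passing through an equilibrium), and a bounded monotone function converges; hence $x(t)\to x^\star\in[0,1]$ with $f(x^\star)=0$ by continuity. If $f\equiv0$ (equivalently $g\equiv0$) every point is a fixed point and we are done, so assume $g\not\equiv0$; then $g$, and hence $f$, has only finitely many zeros. If $f(x_0)>0$ (the case $f(x_0)<0$ is symmetric under the relabeling $x\mapsto1-x$, and $f(x_0)=0$ gives $x^\star=x_0$), then $x^\star$ is the least zero of $f$ exceeding $x_0$ and $f>0$ on $[x_0,x^\star)$.

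The main obstacle is the final step: upgrading ``$x^\star$ is an equilibrium'' to ``$x^\star$ is Lyapunov stable.'' In one dimension this is equivalent to showing $f\le0$ on a right-neighborhood of $x^\star$ (the left side is already nonnegative), i.e.\ to ruling out that $x^\star$ is a zero of $g$ of even order at which $g$ stays positive---a point that attracts from the left but repels to the right. The approach I would take is to chase the sign pattern of $g$ rightward through its finitely many zeros: if $x^\star$ were such a one-sided zero then $f>0$ just past it, and iterating one eventually reaches either a genuine sign change $+\!\to\!-$ or the endpoint $1$; but ``$f>0$ on a left-neighborhood of $1$'' together with $f(1)\le0$ forces $f(1)=0$ and constrains $g$ near $1$. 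Turning this into an honest contradiction is the delicate part, and I expect it to need the finer content of Lemma~\ref{lem:boundary_conds} (that the numerator $g$, not merely $f$, is $\ge0$ at $0$ and $\le0$ at $1$) together with the Bernstein-type positivity the coefficients of $p,q,g$ inherit from the panel entries of $\mR$ lying in $[0,1]$; a clean packaging would be a lemma asserting that a convergent scalar trajectory of this family always terminates at a sign-changing (odd-order) zero of $g$. I would also flag the fallback: if one only needs the weaker conclusion that the dynamical system converges to \emph{some} fixed point---which is exactly what the vertex-reinforced-random-walk correspondence uses---then the Lyapunov function $V(x)=-\int_0^x f(s)\,ds$, for which $\dot V=-f(x)^2\le0$, already yields it through LaSalle's invariance principle, with no sign analysis required.
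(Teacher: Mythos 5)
Your skeleton is the same as the paper's: reduce to the scalar ODE $dx/dt=f(x)$ with $f=g/h$ a ratio of polynomials that is singularity-free on $(0,1)$ by Property B, and feed in the boundary signs $f(0)\ge 0$, $f(1)\le 0$ from Lemma~\ref{lem:boundary_conds}. On one point you go further than the paper: you actually prove that every trajectory converges to \emph{some} equilibrium (forward invariance of $[0,1]$, monotone bounded scalar flow, finitely many zeros of $g$ unless $g\equiv 0$). The paper's proof never does this; it only argues, via a case analysis on the signs of $f(0)$, $f(1)$ and the sign changes of $f$, that at least one stable equilibrium \emph{exists} somewhere in $[0,1]$, and it does not connect that equilibrium to the trajectory launched from a given $x_0\in(0,1)$.

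The gap you flag is therefore real, and your sign-chasing sketch does not close it: if $g$ had an even-order interior zero $r_1$ at which it stays positive, followed by a genuine $+\to-$ crossing at $r_2$, a trajectory started to the left of $r_1$ would converge to $r_1$, a semi-stable point (attracting from the left, repelling to the right) that is not Lyapunov stable, while the stable equilibrium produced by the boundary-sign argument would be $r_2$. For $m=3$ this configuration is excluded for free, since the numerator in Equation~\ref{eqn:dynamics222} is a quadratic with $g(1)=a-1\le 0$, which forces any nonnegative double root to sit at $x=1$; for $m\ge 4$, however, nothing in Lemma~\ref{lem:boundary_conds} alone rules it out, and you would indeed need the extra Bernstein-coefficient structure you allude to (or a weakening of the conclusion) to finish. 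You should be aware that the paper's own proof does not resolve this either --- it establishes existence of a stable equilibrium, not convergence of every trajectory to one --- so you have correctly located the weak point of the statement rather than missed an idea present in the paper. Your fallback (convergence to some fixed point, by monotonicity or by the Lyapunov function $V(x)=-\int_0^x f(s)\,ds$ and LaSalle) is fully rigorous and is exactly what the vertex-reinforced-random-walk correspondence in Theorem~\ref{thm:spacey_conv} actually consumes.
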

\begin{proof}
Note that $f(x)$ is a ratio of two polynomials and that it has no singularities
on the interval $(0, 1)$ by Property $B$. Because $f(x)$ is a continuous
function on this interval, we immediately have the existence of at least one
equilibrium point by Lemma~\ref{lem:boundary_conds}.

If either $f(0) > 0$ or $f(1) < 0$, then at least one of the equilibria points
must be stable.  Consider the case when both $f(0) = 0$ and $f(1) = 0$.  If
$f(x)$ changes sign on the interior, then either $0$, $1$, or the sign change
must be a stable point. If $f(x)$ does not change sign on the interior, then
either $0$ or $1$ is a stable point depending on the sign of $f$ on the
interior.
\end{proof}

We note that it is possible for $f(x) \equiv 0$ on the interval $[0, 1]$.  For
example, this occurs with the deterministic P\'olya urn process described by the
flattened hypermatrix in Equation~\ref{eqn:polya_example}.  However, Lyapunov
stability only requires that, after perturbation from a stationary point, the
dynamics converge to a nearby stationary point and not necessarily to the
\emph{same} stationary point.  In this sense, any point on the interval is a
Lyapunov stable point when $f(x) \equiv 0$.

\subsection{The $2 \times 2 \times 2$ case}

We now investigate the dynamics of the spacey random walker for an order-$3$
hypermatrix $\cmP$ with flattening
\[
\mR = \left[ \begin{array}{cc|cc} 
a & b & c & d \\
1 - a & 1 - b & 1 - c & 1 - d
\end{array} \right]
\]
for arbitrary scalars $a$, $b$, $c$, $d \in [0, 1]$.
In this case, the dynamical system is simple enough to explore analytically.
Given any $x \in [0, 1]$,
\begin{align*}
\mR \cdot (\vz(x) \kron \mI)
&= x\bmat{a & b \\ 1 - a & 1 - b} + (1 - x)\bmat{c & d \\ 1 - c & 1 - d} \\
&= \bmat{c - x(c - a) & d - x(d - b) \\ 1 - c + x(c - a) & 1 - d + x(d - b)},
\end{align*}
and the corresponding dynamical system is
\begin{align}
\frac{dx}{dt} 
= 
\left[\pi\bigl( \mR \cdot (\vz(x) \kron \mI) \bigr)\right]_1 - x \nonumber
&= 
\frac{d - x(d - b)}{1 - c + d + x(c - a + b - d)} - x \nonumber \\
&=
\frac{d + (b + c -2d - 1)x + (a - c + d - b)x^2}{1 - c + d + x(c - a + b - d)}. \label{eqn:dynamics222}
\end{align}
Since the right-hand-side is independent of $t$, this differential equation is
separable:
\[
\int_{x_{0}}^{x}\frac{1 - c - d + (c - a + d - b)x}{d + (b + c - 2d - 1)x + (a  - c + d - b)x^2%
}dx=\int_{0}^{t}dt,
\]
where $x(0)=x_{0}$. Evaluating the integral,
\begin{align}
&\int\dfrac{\delta x+\epsilon}{\alpha x^{2}+\beta x+\gamma}\,dx \label{eqn:dynamics_2x2}\\
&=%
\begin{cases}
\frac{\delta}{2\alpha}\log\left\vert \alpha x^{2}+\beta x+\gamma\right\vert
+\frac{2\alpha\epsilon-\beta\delta}{\alpha\sqrt{4\alpha\gamma-\beta^{2}}}%
\tan^{-1}\left(  \frac{2\alpha x+\beta}{\sqrt{4\alpha\gamma-\beta^{2}}%
}\right)  +C & \text{for }4\alpha\gamma-\beta^{2}>0\\[15pt]%
\frac{\delta}{2\alpha}\log\left\vert \alpha x^{2}+\beta x+\gamma\right\vert
-\frac{(2\alpha\epsilon-\beta\delta)
\tanh^{-1}\left(  \frac{2\alpha x+\beta}{\sqrt{\beta^{2}-4\alpha\gamma}%
}\right)
}{\alpha\sqrt{\beta^{2}-4\alpha\gamma}%
}\,  +C & \text{for }4\alpha\gamma-\beta^{2}<0\\[15pt]%
\frac{\delta}{2\alpha}\log\left\vert \alpha x^{2}+\beta x+\gamma\right\vert
-\frac{2\alpha\epsilon-\beta\delta}{\alpha(2\alpha x+\beta)}+C &\text{for
}4\alpha\gamma-\beta^{2}=0
\end{cases} \nonumber
\end{align}
where
\[
\alpha=a - c + d - b,
\quad\beta=b + c - 2d - 1,
\quad\gamma=d,
\quad\delta=-c - a + d - b,
\quad
\epsilon=1-c-d,
\]
and $C$ is a constant determined by $x_0$.  Equation~\ref{eqn:dynamics222} also
gives us an explicit formula for the equilibria points.

\begin{proposition}\label{prop:stable_point}
The dynamics are at equilibrium ($\frac{dx}{dt} = 0$) if and only if
\[
 x = \left\{
     \begin{array}{ll}
      \dfrac{1 + 2d - b - c \pm \sqrt{(b + c - 2d - 1)^2 - 4(a - c + d - b)d}}{2(a - c + d - b)} & \textnormal{for } a + d \neq b + c\\
       \dfrac{d}{1 + 2d - b - c} & \textnormal{for } a + d = b + c,\\ &\phantom{for } b + c \neq 1 + 2d\\
     \end{array}
   \right.
\]
or for any $x \in [0, 1]$ if $a + d = b + c$ and $b + c = 1 + 2d$.  In this
latter case, the transitions have the form
\begin{equation}\label{eqn:corner_case}
\mR = \left[ \begin{array}{cc|cc} 
1 & b      & 1 - b & 0 \\
0 & 1 - b & b      & 1
\end{array} \right].
\end{equation}
\end{proposition}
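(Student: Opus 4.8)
The plan is to read the equilibria straight off the closed form of the dynamical system in Equation~\ref{eqn:dynamics222}. Write $dx/dt = P(x)/Q(x)$ with numerator $P(x) = (a-c+d-b)x^2 + (b+c-2d-1)x + d$ and denominator $Q(x) = (1-c+d) + (c-a+b-d)x$. Observe that $Q(x)=0$ forces $\mR\cdot(\vz(x)\kron\mI)$ to equal $\mI$, which Property~B rules out for $x\in(0,1)$; hence on the relevant range the equilibria are precisely the roots of the numerator polynomial $P$. So everything reduces to solving $P(x)=0$, and the three branches of the statement are exactly the cases in which $P$, viewed as a polynomial in $x$, has degree $2$, degree $1$, or degree $0$.

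\textbf{Case 1: $\alpha := a-c+d-b \neq 0$, i.e.\ $a+d\neq b+c$.} Here $P$ is a genuine quadratic, so the quadratic formula gives the roots $x = (-\beta\pm\sqrt{\beta^2-4\alpha\gamma})/(2\alpha)$ with $\beta = b+c-2d-1$ and $\gamma = d$. Substituting and using $-\beta = 1+2d-b-c$ yields the first displayed expression in the proposition.

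\textbf{Case 2: $\alpha = 0$ but $\beta = b+c-2d-1 \neq 0$, i.e.\ $a+d=b+c$ and $b+c\neq 1+2d$.} Now $P(x) = \beta x + \gamma$ is affine with the single root $x = -\gamma/\beta = d/(1+2d-b-c)$, which is the second case.

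\textbf{Case 3: $\alpha = \beta = 0$, i.e.\ $a+d=b+c$ and $b+c = 1+2d$.} Then $P(x)\equiv\gamma = d$ is constant, so $dx/dt\equiv d/Q(x)$; for the dynamics to vanish identically we need $d=0$. This is the one step that needs an argument beyond bookkeeping: I would note that the stochasticity constraints $a,b,c,d\in[0,1]$ already force this. Indeed, adding $a+d=b+c$ and $b+c=1+2d$ gives $a = 1+d$, and $a\le 1$ then forces $d\le 0$, hence $d=0$; consequently $a=1$ and $b+c=1$, so $c=1-b$, which is exactly the flattening in Equation~\ref{eqn:corner_case}, and with $d=0$ we indeed have $P\equiv 0$, so every $x\in[0,1]$ is an equilibrium. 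These three cases are exhaustive, which completes the proof.

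I expect Case 3 to be the only non-mechanical part: one must resist the temptation to conclude ``$\alpha=\beta=0\Rightarrow P\equiv 0$'' (it leaves the constant $d$), and instead invoke the box constraints to deduce $d=0$ and simultaneously pin down the exceptional transition matrix. Everything else is routine algebra already laid out in the derivation of Equation~\ref{eqn:dynamics222}.
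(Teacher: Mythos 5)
Your proof is correct and follows essentially the same route as the paper's: read the equilibria off the numerator of Equation~\ref{eqn:dynamics222}, apply the quadratic formula when $a+d\neq b+c$, solve the linear equation when $a+d=b+c$ but $b+c\neq 1+2d$, and in the doubly degenerate case use the constraint $a\le 1$ to force $a=1$, $d=0$, $b+c=1$, yielding the flattening in Equation~\ref{eqn:corner_case}. Your added observations---that Property~B keeps the denominator from vanishing on $(0,1)$, and that the residual constant $\gamma=d$ must itself be shown to vanish---are worthwhile clarifications of steps the paper leaves implicit, but they do not change the argument.
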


\begin{proof}
The first case follows from the quadratic formula.  If $a + d = b + c$, then an
equilibrium point occurs if and only if $d + (b + c - 2d - 1)x = 0$.  If $b + c
\neq 1 + 2d$, then we get the second case.  Otherwise, $a = 1 + d$, which
implies that $a = 1$ and $d = 0$.  Subsequently, $b + c = 1$.
\end{proof}

Of course, only the roots $x \in [0, 1]$ matter for the general case $a + d \neq
b + c$.  Also, by Property B, we know that the value of $b$ in
Equation~\ref{eqn:corner_case} is not equal to $0$.  Finally, we note that for
the P\'olya urn process described in Section~\ref{sec:urn}, $a = b = 1$ and $c =
d = 0$, so the dynamics fall into the regime where any value of $x$ is an
equilibrium point, i.e., $dx / dt = 0$).

\section{Dynamics with many states: limiting distributions and computation}
\label{sec:computation}

Now that we have covered the full dynamics of the $2$-state case, we analyze the
case of several states.  Theorem~\ref{thm:spacey_conv} in
Section~\ref{sec:limiting_vrrw} formalizes the intuition from
Section~\ref{sec:intuition} that any stationary distribution $\vx$ of a spacey
random walk for a higher-order Markov chain $\cmP$ must be a $z$ eigenvector of
$\cmP$ (\emph{i.e.}, satisfy Equation~\ref{eqn:ling_stationary}).  Furthermore,
for any higher-order Markov chain, there always exists a stochastic $z$
eigenvector of the corresponding hypermatrix
$\cmP$~\cite[Theorem~2.2]{li2014limiting}.  However, this does not guarantee
that any spacey random walk will converge.

After, we study the relationship between numerical methods for computing $z$
eigenvectors and the convergence of spacey random walks and find two
conclusions.  First, the dynamics of the spacey random walk may converge even if
the commonly-used power method for computing the $z$ eigenvector does not
(Observation~\ref{obs:conv_power}).  On the other hand, in regimes where the
power method is guaranteed to converge, the dynamics of the spacey random surfer
will also converge (Theorem~\ref{thm:euler_conv}).  A complete characterization
between the computation of solutions to the algebraic sets of equations for the
$z$ eigenvector (Equation~\ref{eqn:ling_stationary}) and the convergence of the
continuous dynamical system is an open question for future research.

\subsection{Limiting behavior of the vertex-reinforced random walk}
\label{sec:limiting_vrrw}

We now summarize Bena\"{i}m's key result for relating the limiting distribution
of the occupation vector $\occ$ for a vertex-reinforced random walk
to the dynamical system.  Recall from Equation~\ref{eqn:dynamical_system} that
for an arbitrary vertex-reinforced random walk, the dynamical system is defined by
$d\vx / dt = \pi(\mM(\vx)) - \vx = f(\vx)$.  Since the function is smooth and $\Delta$ is
invariant under $f$, it generates a flow $\Phi\colon \mathbb{R}_+ \times \Delta \to \Delta$
where $\Phi(t, \vu)$ is the value at time $t$ of the initial value
problem with $\vx(0) = \vu$ and dynamics $f$.  A continuous function $X\colon \mathbb{R}_+ \to \Delta$ is
said to be an \emph{asymptotic pseudotrajectory}~\cite{benaim1996asymptotic} of
$\Phi$ if for any $L \in \mathbb{R}_+$,
\[
\lim_{t \to \infty} \| X(t + L) - \Phi(L, \vx(t)) \| = 0
\]
locally uniformly.  The following theorem, proved by~\citet{benaim1997vertex},
provides an asymptotic pseudotrajectory of the dynamical system in terms of the
occupation vector $\occ$ of a vertex-reinforced random walk.
\begin{theorem}[{{\cite{benaim1997vertex}}}]\label{thm:benaim}
Let $\tau_0 = 0$ and $\tau_n = \sum_{i=1}^n 1 / (i + 1)$.  Define the continuous
function $W$ by $W(\tau_n) = \occseq{n}$ and $W$ affine on $[\tau_n,
  \tau_{n+1}]$.  In other words, $W$ linearly interpolates the occupation vector
on decaying time intervals.  Then $W$ is almost surely an asymptotic
pseudotrajectory of $\Phi$.
\end{theorem}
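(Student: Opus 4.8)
The plan is to recognize the occupation vector as a Robbins--Monro stochastic approximation driven by ``controlled Markov noise'' and to verify the standard sufficient condition (going back to \cite{benaim1996asymptotic,benaim1997vertex}) under which the piecewise-affine interpolation of such a scheme is almost surely an asymptotic pseudotrajectory of its mean-field flow --- here $\Phi$. First I would record the recursion satisfied by the occupation vector: since $\vs(n)=\vs(n-1)+\mathbf{e}_{\xseq{n}}$ (writing $\mathbf{e}_i$ for the $i$th standard basis vector) and $\occseq{n}=\vs(n)/(N+n)$,
\[
\occseq{n}=\occseq{n-1}+\gamma_n\bigl(\mathbf{e}_{\xseq{n}}-\occseq{n-1}\bigr),\qquad \gamma_n=\tfrac{1}{N+n}\sim\tfrac1n .
\]
Writing $\mathbf{e}_{\xseq{n}}-\occseq{n-1}=f(\occseq{n-1})+U_n$ with $f(\vx)=\pi(\occmat(\vx))-\vx$ the vector field of Equation~\eqref{eqn:dynamical_system} and $U_n:=\mathbf{e}_{\xseq{n}}-\pi(\occmat(\occseq{n-1}))$, this is exactly the canonical form $\occseq{n}=\occseq{n-1}+\gamma_n\bigl(f(\occseq{n-1})+U_n\bigr)$. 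The iterates lie in $\Delta_{N-1}$, hence are bounded; $\gamma_n\to0$ and $\sum_n\gamma_n=\infty$; $f$ is Lipschitz (this is what makes the flow $\Phi$ well defined, using Property~B and Theorem~\ref{thm:prop_b_sat}, which fix the recurrent structure of $\occmat(\vw)$ on the interior); and $\tau_n-\sum_{i=1}^{n}\gamma_i$ converges, so one may equivalently prove the statement for the interpolation at the canonical times $\sum_{i=1}^{n}\gamma_i$, the asymptotic-pseudotrajectory property being preserved under an asymptotically constant time-shift. By the standard criterion it therefore remains only to establish the \emph{noise estimate}: for every $T>0$, almost surely,
\[
\lim_{m\to\infty}\ \sup\Bigl\{\ \bigl\|\,\textstyle\sum_{n=m}^{k}\gamma_n U_n\,\bigr\|\ :\ k\ge m,\ \textstyle\sum_{n=m}^{k}\gamma_n\le T\ \Bigr\}=0 .
\]

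The obstruction to $U_n$ being a martingale difference is that $\mathbb{E}[\mathbf{e}_{\xseq{n}}\mid\mathcal{F}_{n-1}]=\occmat(\occseq{n-1})\,\mathbf{e}_{\xseq{n-1}}$ depends on the current state, not just on $\occseq{n-1}$; I would remove this via a Poisson-equation (fundamental-matrix) decomposition of the fast variable. For $\vw$ in the interior of $\Delta_{N-1}$ let $Q_{\vw}$ be the one-step conditional-expectation operator of the Markov chain with matrix $\occmat(\vw)$, namely $(Q_{\vw}g)(x)=\sum_y[\occmat(\vw)]_{yx}\,g(y)$; its stationary distribution is again $\pi(\occmat(\vw))$, so by the Fredholm alternative the ($\mathbb{R}^N$-valued) Poisson equation
\[
G_{\vw}(x)-\bigl(Q_{\vw}G_{\vw}\bigr)(x)=\mathbf{e}_x-\pi(\occmat(\vw))
\]
has a solution, unique up to an additive constant; taking the centered solution, perturbation theory of the fundamental matrix makes $\vw\mapsto G_{\vw}$ locally Lipschitz on the interior. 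Evaluating the Poisson equation at $x=\xseq{n}$ gives $U_n=g_n-\widehat g_n$ with $g_n:=G_{\occseq{n-1}}(\xseq{n})$ and $\widehat g_n:=\bigl(Q_{\occseq{n-1}}G_{\occseq{n-1}}\bigr)(\xseq{n})$. Since $\mathbb{E}[g_{n+1}\mid\mathcal{F}_n]=\bigl(Q_{\occseq{n}}G_{\occseq{n}}\bigr)(\xseq{n})$ and $\|\occseq{n}-\occseq{n-1}\|\le 2\gamma_n$, local Lipschitz continuity of $\vw\mapsto Q_{\vw}$ and $\vw\mapsto G_{\vw}$ yields $\widehat g_n=\mathbb{E}[g_{n+1}\mid\mathcal{F}_n]+O(\gamma_n)$, and hence
\[
U_n=(g_n-g_{n+1})+\xi_{n+1}+O(\gamma_n),\qquad \xi_{n+1}:=g_{n+1}-\mathbb{E}[g_{n+1}\mid\mathcal{F}_n].
\]

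The noise estimate now follows by Abel summation applied to the three pieces. The telescoping term equals $\gamma_m g_m-\gamma_k g_{k+1}+\sum_{m<n\le k}(\gamma_n-\gamma_{n-1})g_n$, which has norm at most $3\gamma_m\sup\|g\|$ and hence tends to $0$ uniformly in $k\ge m$, because the $g_n$ are bounded and $\sum_{n>m}|\gamma_n-\gamma_{n-1}|=\gamma_m$. The $(\xi_{n+1})$ form a bounded martingale-difference sequence, so $M_k:=\sum_{n\le k}\gamma_n\xi_{n+1}$ is an $L^2$-bounded martingale (as $\sum_n\gamma_n^2<\infty$) and thus converges almost surely, giving $\sup_{k\ge m}\|M_k-M_{m-1}\|\to0$. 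The remainder contributes at most $C\sum_{n\ge m}\gamma_n^2\to0$. Adding these three bounds yields the noise estimate --- in fact uniformly over all $k\ge m$ --- and the asymptotic-pseudotrajectory conclusion for $W$ follows from the stochastic-approximation criterion quoted above.

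The one step that is not routine is the behavior near the boundary of $\Delta_{N-1}$. Although $\occseq{n}$ always has strictly positive entries, it may approach a face of the simplex, where certain panels $\paak{k}$ weigh into $\occmat(\vw)$ only faintly, the spectral gap of $\occmat(\vw)$ can collapse, and precisely the quantities the argument rests on --- $\sup\|g\|$ and the Lipschitz constants of $\vw\mapsto Q_{\vw},G_{\vw}$ --- can blow up. This is the technical heart of \cite{benaim1997vertex}. The clean ways to handle it are (a) to strengthen Property~B so that $\occmat(\vw)$ has a single recurrent class for \emph{all} $\vw\in\overline{\Delta_{N-1}}$, whereupon $\pi\circ\occmat$, $Q$ and $G$ extend continuously to the compact set $\overline{\Delta_{N-1}}$ and the bounds become uniform; or (b) to carry out the estimates on the event that the occupation vector stays in a fixed compact interior subset over a given time window, exploiting that the asymptotic-pseudotrajectory property is local in time, and to treat excursions toward the boundary separately. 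Everything else --- the Poisson decomposition and the Abel summation --- is the standard stochastic-approximation machinery.
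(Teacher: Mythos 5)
The paper offers no proof of this statement to compare yours against: Theorem~\ref{thm:benaim} is imported verbatim from \citet{benaim1997vertex}. Your reconstruction is, in substance, Bena\"{i}m's own argument, so it lands in the ``same approach'' category by proxy. Concretely: the recursion $\occseq{n}=\occseq{n-1}+\gamma_n(\vecalt{e}_{\xseq{n}}-\occseq{n-1})$ with $\gamma_n=1/(N+n)$ is correct; the discrepancy between $\tau_n$ and $\sum_{i\le n}\gamma_i$ is summable, so passing to the canonical time scale is harmless; the reduction of the asymptotic-pseudotrajectory claim to the uniform tail-sum noise estimate is exactly the criterion of \cite{benaim1996asymptotic}; and your Poisson-equation decomposition of $U_n$ into a telescoping part, a bounded martingale-difference part, and an $O(\gamma_n)$ remainder is the same device Bena\"{i}m uses (he phrases it via the fundamental matrix of $\occmat(\vw)$, which is precisely your centered solution $G_{\vw}$). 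The Abel-summation bound, the $L^2$-martingale convergence from $\sum_n\gamma_n^2<\infty$, and the $O(\sum_{n\ge m}\gamma_n^2)$ remainder all check out.

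The one place where care is needed is the point you flag yourself: uniformity of $\sup\|G_{\vw}\|$ and of the Lipschitz constants of $\vw\mapsto Q_{\vw},G_{\vw}$ as $\vw$ approaches the boundary of $\Delta_{N-1}$. Your resolution (a) is exactly Bena\"{i}m's standing hypothesis --- he assumes $\vw\mapsto\occmat(\vw)$ maps the \emph{closed} simplex into indecomposable stochastic matrices, so $\pi$, $Q_{\vw}$ and $G_{\vw}$ extend smoothly to a compact set and every bound in your argument becomes uniform. It is worth noting that Property~B as stated in Definition~\ref{def:prop_b} only requires uniqueness of the Perron vector for $\vw$ in the \emph{interior} of the simplex, which is strictly weaker than what both your proof and Bena\"{i}m's require; that is a looseness in how the paper invokes the theorem rather than a defect of your proof. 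Your alternative (b), controlling excursions toward the boundary separately, is only sketched and would need genuine additional work, so a complete write-up should adopt (a).

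(One typographical note: where you write $\mathbf{e}_{\xseq{n}}$ for the standard basis vector, the paper's notation would be $\ve_{\xseq{n}}$; this does not affect the mathematics.)
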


Next, we put Bena\"{i}m's results in the context of spacey random walks.  Let
$\mR$ be the flattening of an order-$m$ hypermatrix $\cmP$ of dimension $n$
representing the transition probabilities of an $m$th-order Markov chain and let
$\vw$ be the (random) occupation vector of a spacey random walk following $\mR$.
\begin{theorem}\label{thm:spacey_conv}
Suppose that $\mR$ satisfies Property B.  Then we can define a flow
$\Phi_S\colon \mathbb{R}_+ \times \Delta_{N-1} \to \Delta_{N-1}$, where
$\Phi_S(t, \vu)$ is the solution of the initial value problem
\[
\frac{d\vx}{dt} = \pi\bigl(\mR \cdot \bigl( \fullkron{\vx}{m-2} \kron \mI \bigr)\bigr) - \vx, \quad \vx(0) = \vu.
\]
Then for any $L \in \mathbb{R}_+$, $\lim_{t \to \infty} \| W(t + L) - \Phi_S(L,
\vx(t)) \| = 0$ locally uniformly, where $W$ is defined as in
Theorem~\ref{thm:benaim}.  In particular, if the dynamical system of the spacey
random walk converges, then the occupation converges to a stochastic $z$
eigenvector $\vx$ of $\cmP$.
\end{theorem}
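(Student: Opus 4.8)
The plan is to obtain this theorem almost directly from Bena\"{i}m's general result (Theorem~\ref{thm:benaim}) once the spacey random walk has been fit into his framework. By Proposition~\ref{prop:spacey_vrrw} together with its order-$m$ generalization from Section~\ref{sec:generalization}, the occupation vector $\vw$ of the spacey random walk following $\mR$ is literally the occupation vector of the vertex-reinforced random walk whose reinforcement map is $\mM(\vx) = \mR \cdot (\fullkron{\vx}{m-2} \kron \mI)$. So I would first observe that the flow $\Phi_S$ in the statement is precisely the flow $\Phi$ attached by Bena\"{i}m to this particular $\mM$, and then, provided $\mM$ meets the regularity hypotheses of his theorem, conclude that the interpolated process $W$ (built from the time change $\tau_n$ exactly as in Theorem~\ref{thm:benaim}) is almost surely an asymptotic pseudotrajectory of $\Phi_S$, with the convergence locally uniform. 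That is the bulk of the statement.

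The step that actually requires work is verifying the hypotheses. The map $\vx \mapsto \mM(\vx)$ is polynomial in the coordinates of $\vx$, hence $C^\infty$, and for $\vx$ in the interior of $\Delta_{N-1}$ it is a column-stochastic matrix which, by Property B (Definition~\ref{def:prop_b}), has a unique Perron vector; equivalently the eigenvalue $1$ of $\mM(\vx)$ is simple there. Analytic perturbation theory for a simple eigenvalue (or the implicit function theorem applied to $(\mM(\vx) - \mI)\vy = 0$ together with $\ve^T\vy = 1$) then shows that $\vx \mapsto \pi(\mM(\vx))$ is smooth on the interior, so $f(\vx) = \pi(\mM(\vx)) - \vx$ is a well-defined smooth vector field; since $\mM(\vx)$ is stochastic, $f$ is tangent to the simplex and generates a flow leaving $\Delta_{N-1}$ invariant. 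I would also record that the spacey occupation vector stays strictly positive for all $n$ (each count satisfies $\vs_i(n) \ge 1$), so $W$ never reaches the boundary, which keeps us inside the region where Property B applies; this is the one place one must be careful, since Property B says nothing about $\mM(\vx)$ when $\vx$ lies on the boundary of the simplex.

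For the ``in particular'' clause I would invoke the standard consequence of being an asymptotic pseudotrajectory: the limit set of $W$---equivalently, the set of accumulation points of the occupation sequence $\{\vw(n)\}$---is almost surely an internally chain transitive set of $\Phi_S$. If the dynamical system converges, in the sense that every forward orbit of $\Phi_S$ tends to an equilibrium, then internally chain transitive sets reduce to single equilibria, so $\vw(n)$ converges to some $\vx$ with $f(\vx) = 0$. The identity $f(\vx) = 0 \iff \mR \cdot (\fullkron{\vx}{m-2} \kron \vx) = \vx$ from the fixed-point computation in Equation~\ref{eqn:stationary} (the order-$m$ analogue of Equation~\ref{eqn:limit}) then says exactly that the limit is a nonnegative, sum-to-one $z$ eigenvector of $\cmP$, i.e.\ a solution of Equation~\ref{eqn:ling_stationary}.

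I expect the main obstacle to be precisely this boundary/regularity bookkeeping: confirming that the hypotheses under which Bena\"{i}m's theorem is stated genuinely hold for $\mM(\vx) = \mR \cdot (\fullkron{\vx}{m-2} \kron \mI)$ given only Property B on the interior of the simplex, and, in the ``in particular'' part, pinning down ``the dynamical system converges'' precisely enough that it forces convergence---not merely chain recurrence---of the occupation vector. The remaining content is translation of notation between Bena\"{i}m's vertex-reinforced random walk and the spacey random walk, plus the already-established equivalence recorded in Equation~\ref{eqn:stationary}.
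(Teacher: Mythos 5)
Your proposal is correct and follows exactly the route the paper intends: the paper offers no separate proof of Theorem~\ref{thm:spacey_conv}, presenting it as a direct specialization of Bena\"{i}m's Theorem~\ref{thm:benaim} to the vertex-reinforced random walk map of Proposition~\ref{prop:spacey_vrrw} (in its order-$m$ form), with Property B guaranteeing that $\pi$ is well defined and with the ``in particular'' clause read off from the fixed-point equivalence in Equation~\ref{eqn:stationary}. Your additional care about smoothness of $\pi(\mM(\vx))$ and the boundary of the simplex is a reasonable elaboration of hypotheses the paper leaves implicit, not a different argument.
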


This result does not say anything about the existence of a limiting
distribution, the uniqueness of a limiting distribution, or computational
methods for computing a stationary vector.  We explore these issues in the
following sections.

\subsection{Power method}
We now turn to numerical solutions for finding a stationary distribution of the
spacey random walk process.  In terms of the tensor eigenvalue formulation, 
an attractive, practical method is a ``power method'' or ``fixed point'' 
iteration~\citet{li2014limiting,chu2014second,gleich2015multilinear}:
\begin{align*}
\vx(n + 1) = \mR \cdot (\vx(n) \kron \vx(n)),\; n = 0, 1, \ldots.
\end{align*}

Unfortunately, the power method does not always converge.  
We provide one example from~\citet{chu2014second}, given by the
flattening
\begin{align}
\mR = \left[ \begin{array}{cc|cc}
0 & 1 & 1 & 1 \\
1 & 0 & 0 & 0 \\
\end{array} \right]. \label{eqn:nonconv_power}
\end{align}
Here we prove directly that the power method diverges.  (Divergence is also a
consequence of the spectral properties of $\mR$~\cite{chu2014second}).  The
steady-state solution is given by $\vx_1 = (\sqrt{5} - 1)/2$ and $\vx_2 = (3 -
\sqrt{5}) / 2$.  Let $a(n)$ be the first component of $\vx(n)$ so that $\vx(n)
= \begin{bmatrix} a(n) & 1 - a(n) \end{bmatrix}^T$.  Then the iterates are given
by $a(n + 1) = 1 - a(n)^2$.  Let $a(n) = \vx_1 - \epsilon$.  Then
\[
a(n + 1) = 1 - (x_1 - \epsilon)^2 \approx 1 - \vx_1^2 + 2\epsilon\vx_1 = \vx_1 + 2\epsilon \vx_1.
\]
And the iterate gets further from the solution,
\[
\lvert a(n + 1) - \vx_1 \rvert = 2\epsilon\vx_1 > \epsilon = \lvert a(n) - \vx_1 \rvert.
\]
Next, note that $a(n + 2) = (1 - (1 - a(n))^2)^2 \ge a(n)$ for $a(n) \ge \vx_1$.
Furthermore, this inequality is strict when $a(n) > \vx_1$.  Thus, starting at
$a(n) = \vx_1 + \epsilon$ will not result in convergence.  We conclude that the
power method does not converge in this case.  However, since this is a two-state
system, the dynamical system for the corresponding hypermatrix must converge to
a stable point by Theorem~\ref{thm:stable_point_2x2}.  We summarize this result
in the following observation.

\begin{observation}\label{obs:conv_power}
Convergence of the dynamical system to a stable equilibrium does not imply
convergence of the power method to a tensor eigenvector.
\end{observation}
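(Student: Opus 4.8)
The plan is to prove the observation by exhibiting a single explicit counterexample rather than arguing in generality. I would take the two-state spacey random walk whose order-$3$ flattening $\mR$ is the one displayed in Equation~\ref{eqn:nonconv_power}. The first step is to check that this $\mR$ satisfies Property B, so that Theorem~\ref{thm:stable_point_2x2} applies. By the remark preceding that theorem, for $N = 2$ this amounts to verifying $\mR \cdot (\vz(x) \kron \mI) \neq \mI$ for every $x \in (0,1)$, and a one-line computation gives $\mR \cdot (\vz(x) \kron \mI) = \bmat{1-x & 1 \\ x & 0}$, which is never the identity for $x \in (0,1)$. Hence Theorem~\ref{thm:stable_point_2x2} guarantees that the dynamical system converges to a Lyapunov stable point from every interior starting value, so the hypothesis of the observation is met for this $\mR$.

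The second step is to show that the power method fails for the same $\mR$. Writing $\vx(n) = \bmat{a(n) \\ 1 - a(n)}$, the iteration $\vx(n+1) = \mR \cdot (\vx(n) \kron \vx(n))$ collapses to the scalar recurrence $a(n+1) = g(a(n))$ with $g(a) = 1 - a^2$, whose unique fixed point in $[0,1]$ is $\vx_1 = (\sqrt{5}-1)/2$. Since $|g'(\vx_1)| = 2\vx_1 = \sqrt{5} - 1 > 1$, the fixed point is repelling; I would note, however, that linear instability alone does not rule out specially tuned orbits that return to $\vx_1$, so a global argument is still needed. To supply it I would pass to the two-step map $h = g \circ g$ and establish the monotonicity estimate $h(a) \ge a$ for $a \in [\vx_1, 1]$, with strict inequality on $(\vx_1, 1)$; this is elementary because $h(a) - a$ factors as $-a(a-1)(a^2 + a - 1)$, which is nonnegative on $[\vx_1,1]$ and vanishes only at the endpoints.

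The final step is to combine these facts. If the power method converged, its limit $L$ would satisfy $L = g(L)$ by continuity, forcing $L = \vx_1$. But starting from any $a(0) = \vx_1 + \epsilon$ with $\epsilon > 0$ small, the monotonicity estimate makes the even-indexed subsequence $a(0), a(2), a(4), \dots$ strictly increasing and therefore bounded below by $\vx_1 + \epsilon$, so it cannot converge to $\vx_1$; since every subsequence of a convergent sequence has the same limit, $a(n)$ cannot converge at all. Thus the $\mR$ of Equation~\ref{eqn:nonconv_power} has a convergent dynamical system but a divergent power method, which is exactly the statement. I expect the monotonicity estimate on $h$ to be the only step with any real content; everything else is bookkeeping, but that estimate is genuinely required, since for a discrete iteration the repelling character of a fixed point does not by itself preclude convergence along some orbit.
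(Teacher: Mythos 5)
Your proposal is correct and follows essentially the same route as the paper: the same counterexample $\mR$ from Equation~\ref{eqn:nonconv_power}, the same scalar recurrence $a(n+1) = 1 - a(n)^2$ with fixed point $(\sqrt{5}-1)/2$, and the same two-step monotonicity argument $a(n+2) \ge a(n)$ for $a(n) \ge \vx_1$, combined with Theorem~\ref{thm:stable_point_2x2} for convergence of the dynamical system. Your version is in fact slightly more careful than the paper's, since you explicitly verify Property B for this $\mR$ and supply the factorization $h(a) - a = -a(a-1)(a^2+a-1)$ that justifies the two-step inequality the paper only asserts.
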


\subsection{Numerical convergence of the dynamical system for the spacey random surfer case}
\label{sec:numerical_convergence}
We will use the case of the spacey random surfer process (Section~\ref{sec:generalization})
in many of the  following examples because it has enough structure to illustrate
a definite stationary distribution. Recall that in this process,
the walker transitions to a random node with probability $1 - \alpha$ 
following the teleportation vector $\prtel$.
 
\begin{remark} For any $\alpha < 1$, the spacey random surfer
process satisfies Property B for any stochastic teleportation vector $\vv$.
\end{remark} 

This remark follows because for any occupation vector, the matrix $\mM(\vw(n))$ 
for the spacey random surfer corresponds to the convex combination of a column stochastic matrix and a rank-1 column stochastic matrix. The rank-1 component emerges from the teleportation due to the surfer. These convex combinations are equivalent to the Markov chains that occur in Google's PageRank models. They only have only a single recurrent class determined by the location of the teleportation. By way of intuition, just consider the places that could be visiting \emph{starting} from anywhere the teleportation vector is non-zero. This class recurs because there is always a finite probability of restarting. Thus, the matrix $\mM(\vw(n))$ always has a unique stationary distribution~\cite[Theorem~3.23]{berman1994nonnegative}.

\citet{gleich2015multilinear} design and analyze several computational methods
for the tensor eiegenvalue problem in Equation~\ref{eqn:limit}. 
We summarize their results on the power method for the spacey random surfer
model.

\begin{theorem}[{{\cite{gleich2015multilinear}}}]\label{thm:gleich_power_convergence}
Let $\cmP$ be an order-$m$ hypermatrix for a spacey random surfer model with
$\alpha < 1 / ( m - 1)$ and consider the power method initialized with the
teleportation vector, i.e., $\vx(0) = \prtel$.
\begin{enumerate}
\item There is a unique solution $\vx$ to Equation~\ref{eqn:limit}.
\item The iterate residuals are bounded by
$\| \vx(n) - \vx \|_{1} \le 2\left[\alpha(m-1)\right]^n$.
\end{enumerate}
\end{theorem}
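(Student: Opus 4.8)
The plan is to recognize the spacey random surfer power iteration as a Banach fixed-point iteration on the probability simplex $\Delta_{N-1}$ equipped with the $1$-norm, and to read off the rate $\alpha(m-1)$ from the multilinearity of the map $\vx \mapsto \mR \cdot (\fullkron{\vx}{m-1})$.

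First I would set up the iteration map. Following Section~\ref{sec:generalization}, the spacey random surfer is a spacey random walk whose flattening has the form $\mR = \alpha\widehat{\mR} + (1-\alpha)\,\prtel\ve^T$ for a column-stochastic matrix $\widehat{\mR}$; since $\ve^T(\fullkron{\vx}{m-1}) = 1$ for $\vx \in \Delta_{N-1}$, the power iteration $\vx(n+1) = \mR \cdot (\fullkron{\vx(n)}{m-1})$ is exactly $\vx(n+1) = T(\vx(n))$ with
\[
T(\vx) = \alpha\,\widehat{\mR} \cdot (\fullkron{\vx}{m-1}) + (1-\alpha)\,\prtel.
\]
I would then check that $T$ maps $\Delta_{N-1}$ into itself: $\fullkron{\vx}{m-1}$ is a probability vector whenever $\vx$ is, column-stochasticity of $\widehat{\mR}$ puts $\widehat{\mR} \cdot (\fullkron{\vx}{m-1})$ in $\Delta_{N-1}$, and $T(\vx)$ is a convex combination of that vector with the stochastic vector $\prtel$. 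In particular the orbit starting at $\vx(0) = \prtel$ remains in $\Delta_{N-1}$, which is complete under $\|\cdot\|_1$.

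The key step, and the main obstacle, is the Lipschitz estimate
\[
\bigl\| \widehat{\mR} \cdot (\fullkron{\vx}{m-1}) - \widehat{\mR} \cdot (\fullkron{\vy}{m-1}) \bigr\|_1 \;\le\; (m-1)\,\|\vx - \vy\|_1, \qquad \vx, \vy \in \Delta_{N-1}.
\]
I would prove this with a telescoping identity: $\fullkron{\vx}{m-1} - \fullkron{\vy}{m-1}$ equals the sum over $k = 1, \dots, m-1$ of the terms $\vx^{\kron(k-1)} \kron (\vx - \vy) \kron \vy^{\kron(m-1-k)}$, and each such term has $1$-norm $\|\vx\|_1^{k-1}\,\|\vx - \vy\|_1\,\|\vy\|_1^{m-1-k} = \|\vx - \vy\|_1$ because $\|\vx\|_1 = \|\vy\|_1 = 1$. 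Applying the triangle inequality over the $m-1$ terms and using $\|\widehat{\mR}\|_1 = 1$ (column stochasticity) gives the estimate, so $\|T(\vx) - T(\vy)\|_1 \le \alpha(m-1)\,\|\vx - \vy\|_1$; the hypothesis $\alpha < 1/(m-1)$ makes $T$ a strict contraction on $\Delta_{N-1}$.

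The rest is routine. The Banach fixed-point theorem yields a unique fixed point $\vx$ of $T$ in $\Delta_{N-1}$, which is the stochastic solution of Equation~\ref{eqn:limit}, giving part~1, and the iterates converge to it. Iterating the contraction estimate gives $\|\vx(n) - \vx\|_1 \le [\alpha(m-1)]^n\,\|\vx(0) - \vx\|_1$, and since $\vx(0) = \prtel$ and $\vx$ both lie in $\Delta_{N-1}$ we have $\|\vx(0) - \vx\|_1 \le \|\vx(0)\|_1 + \|\vx\|_1 = 2$, which yields part~2.
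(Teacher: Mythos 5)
Your proof is correct: the telescoping identity gives exactly the bound $\|\fullkron{\vx}{m-1} - \fullkron{\vy}{m-1}\|_1 \le (m-1)\|\vx-\vy\|_1$ for stochastic vectors, which makes the surfer iteration an $\alpha(m-1)$-contraction on $\Delta_{N-1}$, and the Banach fixed-point theorem then delivers both the uniqueness claim and the residual bound via $\|\prtel - \vx\|_1 \le 2$. The paper itself states this theorem as an imported result from \citet{gleich2015multilinear} without proof, but your argument is essentially the one in that reference---your Kronecker-difference estimate is precisely its Lemma~4.4, which this paper also invokes later in the proof of Theorem~\ref{thm:euler_conv}---so this is the same approach, not a new one.
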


These previous results reflect statements about the tensor eigenvector problem, but
not the stationary distribution of the spacey random surfer. In the course of the new
results below, we establish a sufficient condition for the
spacey random surfer process to have a stationary distribution. 

We analyze the forward Euler scheme for numerically solving the dynamical
system of the spacey random surfer.  Our analysis depends on the parameter
$\alpha$ in this model, and in fact, we get the same $\alpha$ dependence as in
Theorem~\ref{thm:gleich_power_convergence}.  The following result gives
conditions for when the forward Euler method will converge and also
when the stationary distribution exists.

\begin{theorem}\label{thm:euler_conv}
Suppose there is a fixed point $\vx$ of the dynamical system.  The time-stepping
scheme $\vx(n + 1) = hf(\vx(n)) + \vx(n)$ to numerically solve the dynamical
system in Eq.~\ref{eqn:dynamical_system} converges to $\vx$ when $\alpha < 1/(m
- 1)$ and $h \le (1 - \alpha) / (1 - (m-1)\alpha)$.  In particular, since the
forward Euler method converges as $h \to 0$, the spacey random surfer process
always converges to a unique stationary distribution when $\alpha < 1/(m-1)$.
\end{theorem}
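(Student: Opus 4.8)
The plan is to read the time-stepping recursion as the iteration of the map
\[
g(\vx) \;=\; \vx + h\,f(\vx) \;=\; (1-h)\,\vx + h\,\pi(\mM(\vx)), \qquad \mM(\vx) = \mR\cdot(\fullkron{\vx}{m-2}\kron\mI),
\]
on $\Delta_{N-1}$, and to prove that under the stated hypotheses $g$ is a geometric $\ell^1$-contraction. Banach's fixed point theorem then produces a unique fixed point of $g$, which must be the assumed $\vx$ because the fixed points of $g$ are exactly the equilibria of Equation~\ref{eqn:dynamical_system}, i.e.\ the stochastic $z$ eigenvectors of $\cmP$; it also shows $\vx(n)\to\vx$ geometrically from every starting point. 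The final ``in particular'' sentence will follow by letting $h\to0$ and invoking Theorem~\ref{thm:spacey_conv}.

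The crux is a Lipschitz estimate for $\vx\mapsto\pi(\mM(\vx))$ on $\Delta_{N-1}$, and this is the one place the spacey random \emph{surfer} structure enters. For the surfer the flattening splits as $\mR = \alpha\,\mR_P + (1-\alpha)\,\prtel\ve^T$, where $\mR_P$ is the column-stochastic flattening of the underlying higher-order Markov chain, so that $\mM(\vx) = \alpha\,\mat{S}(\vx) + (1-\alpha)\,\prtel\ve^T$ with the column-stochastic matrix $\mat{S}(\vx) := \mR_P\cdot(\fullkron{\vx}{m-2}\kron\mI)$. Hence $\pi(\mM(\vx)) = (1-\alpha)(\mI - \alpha\,\mat{S}(\vx))^{-1}\prtel$ is an ordinary PageRank vector, and subtracting this identity for two probability vectors $\vx,\vy$ gives
\[
(\mI - \alpha\,\mat{S}(\vx))\,(\pi(\mM(\vx)) - \pi(\mM(\vy))) \;=\; \alpha\,(\mat{S}(\vx) - \mat{S}(\vy))\,\pi(\mM(\vy)).
\]
I would bound $\|(\mI - \alpha\,\mat{S}(\vx))^{-1}\|_1 \le 1/(1-\alpha)$ via the Neumann series, using that each $\mat{S}(\vx)$ is column-stochastic hence $\ell^1$-nonexpansive, and bound
\[
\|(\mat{S}(\vx) - \mat{S}(\vy))\,\pi(\mM(\vy))\|_1 = \|\mR_P\cdot((\fullkron{\vx}{m-2} - \fullkron{\vy}{m-2})\kron\pi(\mM(\vy)))\|_1 \le (m-2)\,\|\vx - \vy\|_1
\]
using $\|\mR_P\cdot\vu\|_1\le\|\vu\|_1$, the fact that $\vx$, $\vy$, $\pi(\mM(\vy))$ are probability vectors, and the telescoping identity for the difference of Kronecker powers. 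Combining, $\|\pi(\mM(\vx)) - \pi(\mM(\vy))\|_1 \le \kappa\,\|\vx-\vy\|_1$ with $\kappa := \alpha(m-2)/(1-\alpha)$.

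Granting this, since $\pi(\mM(\cdot))$ and the identity both map $\Delta_{N-1}$ into itself, so does $g$ for $0<h\le1$, and in that range
\[
\|g(\vx) - g(\vy)\|_1 \;\le\; (1-h)\,\|\vx-\vy\|_1 + h\kappa\,\|\vx-\vy\|_1 \;=\; (1 - h(1-\kappa))\,\|\vx - \vy\|_1 .
\]
The contraction factor $1 - h(1-\kappa)$ lies in $[0,1)$ precisely when $\kappa<1$ and $h(1-\kappa)\le1$; since $\kappa<1$ is equivalent to $\alpha(m-1)<1$, i.e.\ $\alpha<1/(m-1)$, and $1/(1-\kappa) = (1-\alpha)/(1-(m-1)\alpha)$, these are exactly the hypotheses of the theorem, and Banach's theorem gives convergence of the time-stepping scheme to $\vx$. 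For the concluding statement, the same estimate --- divided by $h$ and sent to $h\to0^+$ --- yields the differential inequality $\frac{d}{dt}\|\vx(t) - \vx\|_1 \le -(1-\kappa)\,\|\vx(t)-\vx\|_1$ for the flow of Equation~\ref{eqn:dynamical_system}, so the dynamical system itself converges to $\vx$ whenever $\alpha<1/(m-1)$; by Theorem~\ref{thm:spacey_conv} the occupation vector of the spacey random surfer then converges to this unique stochastic $z$ eigenvector, i.e.\ the process has a (unique) stationary distribution.

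The main obstacle is the Lipschitz bound for the PageRank map $\vx\mapsto\pi(\mM(\vx))$: getting the constant $\alpha(m-2)/(1-\alpha)$ requires combining the resolvent (Neumann-series) bound with the telescoping bound on $\|\fullkron{\vx}{m-2} - \fullkron{\vy}{m-2}\|_1$ and keeping straight which matrices are column-stochastic and which are only column-sum-one. A lesser point is that the iterates should remain in $\Delta_{N-1}$; this is automatic for $h\le1$ (a convex combination), which already suffices for the $h\to0$ conclusion, and the nonnegativity of the contraction factor --- which is exactly the content of $h\le(1-\alpha)/(1-(m-1)\alpha)$ --- is the ingredient that lets one take steps larger than one.
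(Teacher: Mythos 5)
Your proposal is correct and follows essentially the same route as the paper's proof: the heart of both arguments is the Lipschitz bound $\|\pi(\mM(\vx)) - \pi(\mM(\vy))\|_1 \le \tfrac{(m-2)\alpha}{1-\alpha}\|\vx-\vy\|_1$ (the paper obtains it for $\vy$ equal to the fixed point by the same subtract-and-rearrange that your Neumann-series resolvent bound makes explicit, citing the identical Kronecker-difference lemma), followed by the identical contraction factor $1 - h(1-(m-1)\alpha)/(1-\alpha)$ for the Euler map. The only differences are presentational: you package the iteration as a Banach contraction (which gives uniqueness for free, where the paper cites an external lemma) and pass to the continuous limit via a differential inequality rather than via convergence of Euler's method, and both proofs share the same implicit restriction to $h\le 1$ in the triangle-inequality step.
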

\begin{proof}
Consider a transition probability hypermatrix $\cmP$ representing the
transitions of an $m$th-order Markov chain, and let $\mR$ be the one-mode
unfolding of $\cmP$. The fixed point $\vx$ satisfies
\[
\vx = \alpha \mR (\fullkron{\vx}{m-1}) + (1 - \alpha)\vv.
\]
Let $\vy(n)$ be the stationary distribution satisfying
\[
\vy(n) = \alpha\mR (\fullkron{\vx(n)}{m-2} \kron \mI)\vy(n) + (1 - \alpha)\prtel,
\]
and let $\| \cdot \|$ denote the $1$-norm. Subtracting $\vx$ from both sides,
\begin{align}
\| \vy(n) - \vx \|
&= \alpha \| \mR \| \cdot \| \fullkron{\vx(n)}{m-2} \kron \vy(n) - \fullkron{\vx}{m-1}\|  \nonumber \\
&\le \alpha\left((m-2)\|\vx(n) - \vx\| + \|\vy(n) - \vx\|\right). \nonumber
\end{align}
The inequality on the difference of Kronecker products follows from
\citet[Lemma~4.4]{gleich2015multilinear}.  This gives us a bound on the distance
between $\vy(n)$ and the solution $\vx$:
\begin{align}
\| \vy(n) - \vx \| \le \frac{(m-2)\alpha}{1 - \alpha}\|\vx(n) - \vx\|. \nonumber
\end{align}
Taking the next time step,
\begin{align}
\vx(n + 1) - \vx
&= hf(\vx(n)) + \vx(n) - \vx \nonumber \\
&= h(\vy(n) - \vx) + (1 - h)(\vx(n) - \vx). \nonumber
\end{align}
Finally, we bound the error on the distance between $\vx(n)$ and the fixed point
$\vx$ as a function of the previous time step:
\begin{align}
\|\vx(n+1) - \vx\| 
&\le h\frac{(m-2)\alpha}{1 - \alpha}\|\vx(n) - \vx\| + (1 - h)\|\vx(n) - \vx\| \\
&=\left(1 - \frac{h(1 - (m-1)\alpha)}{1 - \alpha}\right)\|\vx(n) - \vx\|. \label{eqn:contraction}
\end{align}

Now fix $T = hn$ and let $\vu(T)$ denote the continuous time solution to the initial value problem at time $T$.  Then $\| \vu(T) - \vx \| \le \| \vu(T) - \vx(n) \| + \| \vx(n) - \vx \|$.  Since the dynamical system is Lipschitz continuous and Euler's method converges, the first term goes to $0$ as $h \to 0$.  Simultaneously taking the limit as $n \to \infty$, the second term is bounded by $2e^{-T(1 - (m-1)\alpha) / (1 - \alpha)}$ through Equation~\ref{eqn:contraction}.  Thus, the dynamical system converges.

Uniqueness of the fixed point $\vx$ follows
from~\cite[Lemma~4.6]{gleich2015multilinear}.
\end{proof}

We note that the forward Euler scheme is just one numerical method for analyzing
the spacey random surfer process.  However, this theorem gives the same convergence dependence on $\alpha$ as
Theorem~\ref{thm:gleich_power_convergence} gives for convergence of the power method.

\section{Numerical experiments on trajectory data}
\label{sec:data}

We now switch from developing theory to using the spacey random walk model to
analyze data.  In particular, we will use the spacey random walk to model
trajectory data, i.e., sequences of states.  We first show how to find the
transition probabilities that maximize the likelihood of the spacey random walk
model and then use the maximum likelihood framework to train the model on
synthetically generated trajectories and a real-world dataset of New York City
taxi trajectories.

\subsection{Learning transition probabilities}\label{sec:learning}

Consider a spacey random walk model with a third-order transition probability
hypermatrix and a trajectory $\xseq{1}, \ldots, \xseq{Q}$ on $N$ states.  With
this data, we know the occupation vector $\vw(n)$ at the $n$th step of the
trajectory, but we do not know the true underlying transition probabilities.
For any given transition probability hypermatrix $\cmP$, the probability of a
single transition is
\[
\prob{\xseq{n + 1} = i \given \xseq{n} = j}
= \sum_{k=1}^{N}\prob{\yseq{n} = k}\pijk = \sum_{k=1}^{N}\vw_k(n)\pijk.
\]
We can use maximum likelihood to estimate $\cmP$.  The maximum likelihood
estimator is the minimizer of the following negative log-likelihood minimization
problem:
\begin{equation}\label{eqn:optimization}
\begin{aligned}
& \underset{\cmP}{\text{minimize}}
& & -\sum_{q = 2}^{Q} \log\left(\sum_{k=1}^{N}\vw_k(q - 1)\cmP_{\xseq{q} \xseq{q - 1} k}\right) \\
& \text{subject to}
& &    \sum_{i=1}^{N}\pijk = 1, \; 1 \le j, k \le N, \quad 0 \le \pijk \le 1, \; 1 \le i, j, k \le N.
\end{aligned}
\end{equation}
(Equation~\ref{eqn:optimization} represents negative log-likelihood minimization
for a single trajectory; several trajectories introduces an addition summation.)
Since $\vw_k(q)$ is just a function of the data (the $\xseq{n}$), the objective
function is the sum of negative logs of an affine function of the optimization
variables.  Therefore, the objective is a smooth convex function.  For our
experiments in this paper, we optimize this objective with a projected gradient
descent algorithm.  The projection step computes the minimal Euclidean distance
projection of the columns $\cmP_{\bullet jk}$ onto the simplex, which takes linear
time~\cite{duchi2008efficient}.

In the above formulation we assume that storage of $\cmP$ is not an issue, which
will be the case for our examples in the rest of this section.  In practice, one could
modify the optimization problem to learn a sparse or structured $\cmP$.

\subsection{Synthetic trajectories}

We first tested the maximum likelihood procedure on synthetically generated
trajectories that follow the spacey random walk stochastic process.  Our
synthetic data was derived from two sources.  First, we used two transition
probability hypermatrices of dimension $N = 4$ from
\citet{gleich2015multilinear}:
\begin{align}
&\mR_1 = \left[ \begin{array}{cccc|cccc|cccc|cccc}
0 & 0 & 0 & 0 & 0 & 0 & 0 & 0 & 0 & 0    & 0 & 0 & 1/2 & 0 & 0 & 1 \\
0 & 0 & 0 & 0 & 0 & 1 & 0 & 1 & 0 & 1/2 & 0 & 0 & 0    & 1 & 0 & 0 \\
0 & 0 & 0 & 0 & 0 & 0 & 1 & 0 & 0 & 1/2 & 1 & 0 & 0    & 0 & 0 & 0 \\
1 & 1 & 1 & 1 & 1 & 0 & 0 & 0 & 1 & 0    & 0 & 1 & 1/2 & 0 & 1 & 0 \\
\end{array} \right], \nonumber \\ 
&\mR_2 = \left[ \begin{array}{cccc|cccc|cccc|cccc}
0 & 0 & 0 & 0 & 0 & 0 & 0 & 0 & 0 & 0    & 0 & 1 & 1 & 0 & 1 & 0 \\
0 & 0 & 0 & 0 & 0 & 1 & 0 & 1 & 0 & 1/2 & 0 & 0 & 0 & 1 & 0 & 0 \\
0 & 0 & 0 & 0 & 0 & 0 & 1 & 0 & 0 & 1/2 & 1 & 0 & 0 & 0 & 0 & 0 \\
1 & 1 & 1 & 1 & 1 & 0 & 0 & 0 & 1 & 0    & 0 & 0 & 0 & 0 & 0 & 1 \\
\end{array} \right]. \nonumber 
\end{align}
Note that the transition graph induced by $\mR_{i} \cdot (\vx \kron \mI)$ is
strongly connected for any $\vx > 0$ with self-loops on every node, $i = 1, 2$.
Thus, by Theorem~\ref{thm:prop_b_sat}, both $\mR_1$ and $\mR_2$ satisfy Property
B.  Second, we generated random transition probability hypermatrices of the same
dimension, where each column of the flattened hypermatrix is draw uniformly at
random from the simplex $\Delta_3$.  We generated 20 of these hypermatrices.
Since the entries in these hypermatrices are all positive, they satisfy Property
B.

For each transition probability hypermatrix, we generated 100 synthetic
trajectories, each with 200 transitions.  We used 80 of the trajectories as
training data for our models and the remaining 20 trajectories as test data.  We
trained the following models:
\begin{itemize}
\item The spacey random walk model where the transition probability hypermatrix
  is estimated by solving the maximum likelihood optimization problem
  (Equation~\ref{eqn:optimization}) with projected gradient descent.
\item A ``zeroth-order" Markov chain where the probability of transitioning to
  a state is the empirical fraction of time spent at that
  state in the training data.  We interpret this model as an ``intelligent"
  random guessing baseline.
\item A first-order Markov chain where the transitions are estimated empirically
  from the training data:
\begin{equation*}\label{eqn:fomc_mle}
\mP_{ij} =
\frac{\text{$\#$(transitions from $j$ to $i$)}}
{\text{$\sum_{l} \#$(transitions from $j$ to $l$)}}.
\end{equation*}
It is well-known that these transition probabilities are the
maximum likelihood estimators for the transition probabiliities of a first-order Markov chain.
\item A second-order Markov chain where the transition probabilities are
  estimated empirically from the training data:
\begin{equation*}\label{eqn:somc_mle}
\pijk =
\frac{\text{$\#$(transitions from $(j, k)$ to $(i, j)$)}}
{\text{$\sum_{l} \#$(transitions from $(j, k)$ to $(l, j)$)}}.
\end{equation*}
As was the case for the first-order chain, these transition probabilities are
the maximum likelihood estimators for the transition probabilities.
\end{itemize}

\begin{table}[tb]
\centering
\caption{%
Root mean square error (RMSE) on three test data sets with zeroth-order (ZO),
first-order (FO), and second-order (SO) Markov chain (MC) models and spacey
random walk (SRW) models.  The true spacey random walk model was used to
generate the training and test data.  Thus, the RMSE for ``true SRW" represents
the best we can expect from any model.  For the ``Random" (Rand.) data, we list
the mean error plus or minus one standard deviation over 20 data sets.  For the
constructed transition probability hypermatrices given by $\mR_{1}$ and
$\mR_{2}$, the learned spacey random walk model out-performs the Markov chain
models.  For the randomly generated transitions, all the first- and second-order
Markov chain models and the learned SRW model predict as well as the true
generative model.
}
\vspace{-0.25cm}
\begin{tabular}{@{\hspace{-0.005cm}} c c c c c c}
\toprule
& ZOMC & FOMC & SOMC & learned SRW & true SRW \\
\midrule
$\mR_{1}$ & 0.690 & 0.465 & 0.457 & 0.434 & 0.434 \\
$\mR_{2}$ & 0.526 & 0.325 & 0.314 & 0.292 & 0.292 \\
Rand.    & 0.743$\pm$0.007 & 0.718$\pm$0.010 & 0.718$\pm$0.010 & 0.717$\pm$0.011 & 0.717$\pm$0.011 \\
\bottomrule
\end{tabular}
\label{tab:synthetic_results}
\end{table}
For every state visited in every trajectory of the test set, each model gives a
probability $p$ of transitioning to that state.  We computed the root mean
square error (RMSE), where the error contribution for each visited state is $1 -
p$.  The RMSEs are summarized in Table~\ref{tab:synthetic_results}.  We include
the test errors for the ``true" spacey random walk model, i.e., we use the known
synthetic parameters for prediction.  These errors represent the smallest errors
we could expect from any model.

For the transition probability hypermatrices $\mR_1$ and $\mR_2$, the spacey
random walk clearly out-performs the Markov chains and has performance on par
with the true underlying model.  At a high level, both $\mR_1$ and $\mR_2$ are
adversarial to Markov chain modeling because they have multiple fixed points
(vectors that satisfy Equation~\ref{eqn:ling_stationary}).  For instance, the
unit basis vectors $\ve_2$ and $\ve_3$ are each stationary points for both
hypermatrices.  In fact, through Matlab's symbolic toolbox, we found four
stationary points for each of the hypermatrices.  Although these fixed points
may not be limiting distributions of the spacey random walk, their presence is
evident in finite trajectories.  For example, there are several trajectories
with long sequences of state 2 or state 3 (corresponding to the $\ve_2$ and
$\ve_3$ stationary points) in addition to more heterogeneous trajectories.  The
spacey random walk is able to understand these sequences because it models the
occupation vector.

On the other hand, the random transition probability hypermatrices are much more
amenable to modeling by Markov chains.  Apart from the zeroth-order Markov
chain, each model has roughly the same performance and predicts as well as the
true model on the test data.  We verified through Matlab's symbolic toolbox that
each of the twenty random hypermatrices has exactly one stationary point.
Therefore, we do not see the same effects of multiple stationary points on
finite trajectories, as with $\mR_1$ and $\mR_2$.  Furthermore, if the spacey
random walk process converges, it can only converge to a single asymptotically
first-order Markovian process.

\subsection{Taxi trajectories}\label{sec:taxis}

\begin{figure}[t]
\centering
\includegraphics[height=6.4cm]{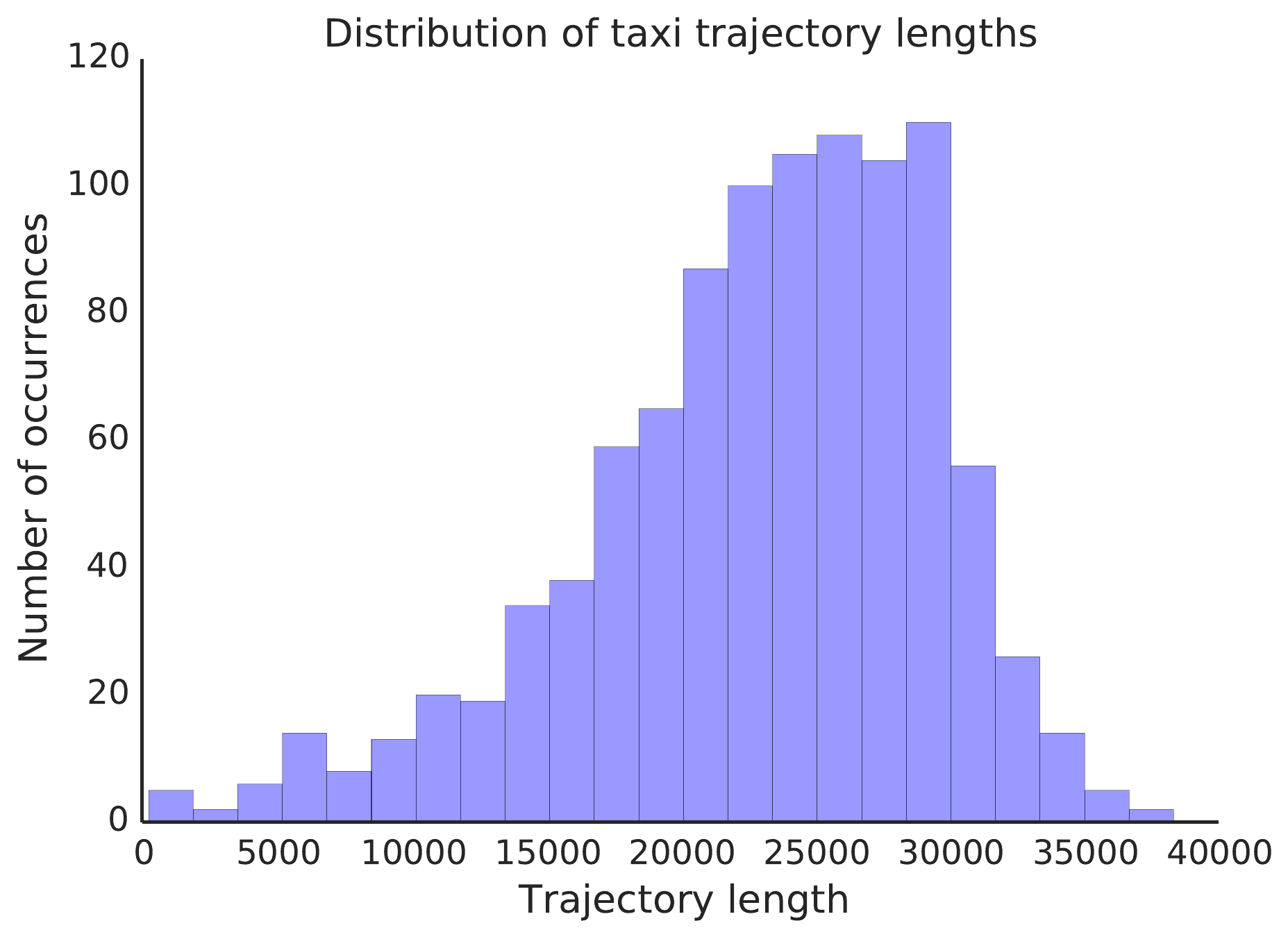}\quad
\includegraphics[height=8.0cm]{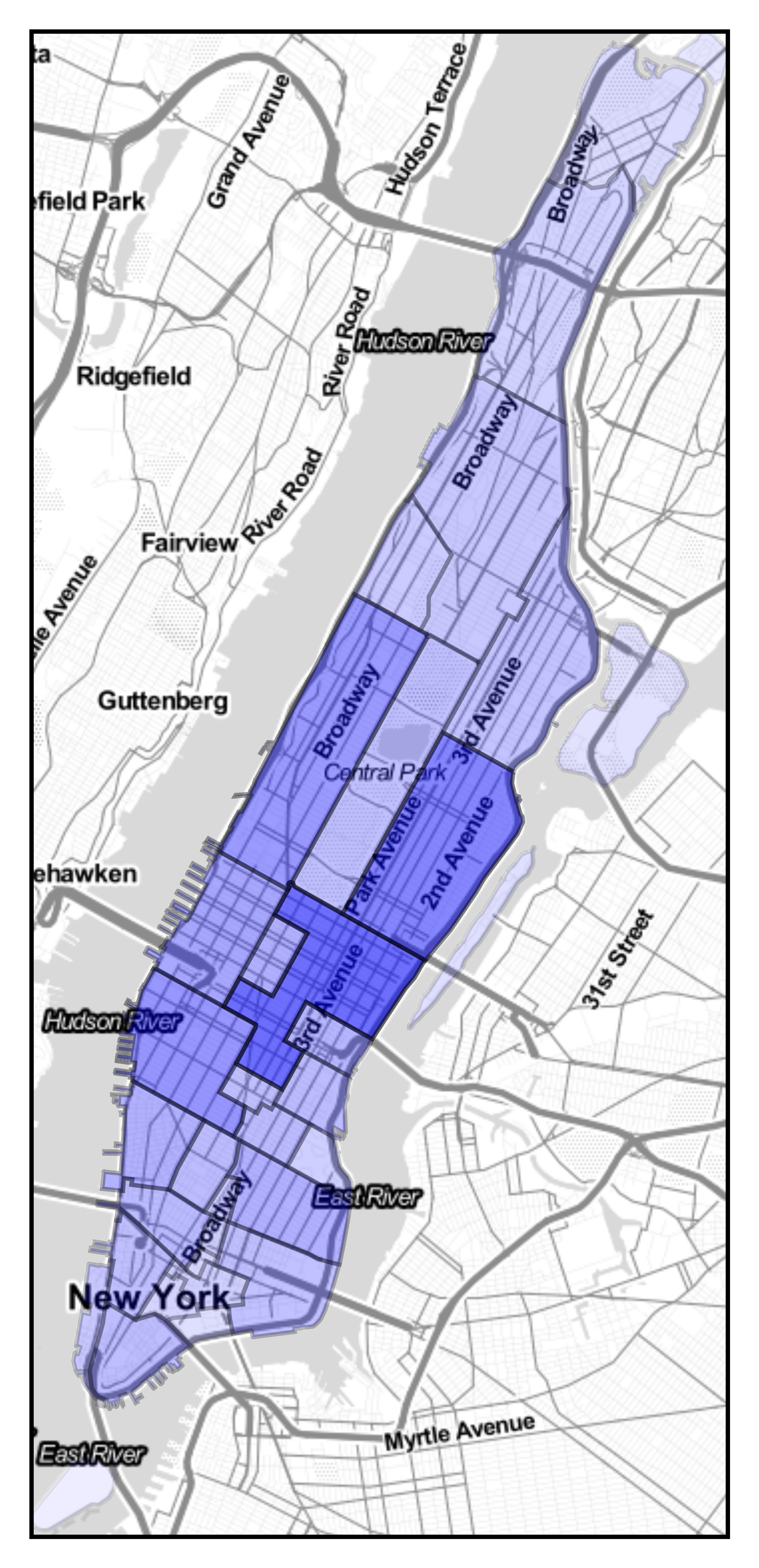}
\vspace{-0.6cm}
\caption{ Summary statistics of taxi dataset.  Left: distribution of trajectory
  lengths.  Right: empirical geographic frequency at the resolution of Manhattan
  neighborhoods (log scale).  Darker colors are more frequently visited by
  taxis.  }
\label{fig:taxi_data}
\end{figure}

Next, we modeled real-world transportation data with the spacey random walk
process.  We parsed taxi trajectories from a publicly available New York City
taxi dataset.%
\footnote{The original dataset is available at
  \url{http://chriswhong.com/open-data/foil_nyc_taxi/}.  The processed data used
  for the experiments in this paper are available with our codes.}
Our dataset consists of the sequence of locations for 1000 taxi drivers.
(Recall from Section~\ref{sec:transportation} how the spacey random walk models
this type of data.)  We consider locations at the granularity of neighborhoods
in Manhattan, of which there are 37 in our dataset.  One of these states is an
``outside Manhattan" state to capture locations outside of the borough.  If a
driver drops off a passenger in location $j$ and picks up the next passenger in
location $j$, we do not consider it a transition.  However, if a driver picks up
the next passenger in location $i \neq j$, we count the transition from location
$j$ to $i$.  Figure~\ref{fig:taxi_data} shows the lengths of the trajectories
and the empirical geographic distribution of the taxis.  Most trajectories
contain at least 20,000 transitions, and the shortest trajectory contains 243
transitions.  The geographic distribution shows that taxis frequent densely
populated regions such as the Upper East Side and destination hubs like Midtown.

We again compared the spacey random walk to the Markov chain models. We used 800
of the taxi trajectories for model training and 200 trajectories for model
testing.  Table~\ref{tab:taxi_results} lists the RMSEs on the test set for each
of the models.  We observe that the spacey random walk and second-order Markov
chain have the same predictive power, and the first-order Markov chain has worse
performance.  If the spacey random walk converges, we know from the results in
Section~\ref{sec:computation} that the model is asymptotically (first-order)
Markovian.  Thus, for this dataset, we can roughly evaluate the spacey random
walk as getting the same performance as a second-order model with asymptotically
first-order information.  We note that at first glance, the performance gains
over the first-order Markov chain appear only modest (0.011 improvement in
RMSE).  However, the results summarized in Table~\ref{tab:synthetic_results}
show that it is difficult to gain substantial RMSE improvements on a four-state
process that is actually generated from the spacey random walk process.  With the taxi
data, we have even more states (37) and the difference between the zeroth-order
Markov chain baseline to the first-order Markov chain RMSEs is only 0.076.  We
conclude that the spacey random walk model is not a perfect model for
this dataset, but it shows definite improvement over a first-order Markov chain
model.

\begin{table}[tb]
\centering
\caption{%
Root mean square errors on the test set for the taxi trajectory data.  The
spacey random walk and second-order Markov chain have the same performance and
are better predictors than a first-order Markov chain.  The zeroth-order Markov
chain corresponds to predicting based on the frequency spent at each state in
the training data.
}
\vspace{-0.25cm}
\begin{tabular}{c c c c c c}
\toprule
\multicolumn{1}{l}{Zeroth-order} & \multicolumn{1}{l}{First-order} & \multicolumn{1}{l}{Second-order} & \multicolumn{1}{l}{Learned Spacey}\\
\multicolumn{1}{l}{Markov chain} & \multicolumn{1}{l}{Markov Chain} & \multicolumn{1}{l}{Markov Chain} &  \multicolumn{1}{l}{Random Walk} \\
\midrule
0.922 & 0.846  & 0.835 & 0.835  \\
\bottomrule
\end{tabular}
\label{tab:taxi_results}
\end{table}

\section{Discussion}

This article analyzes a stochastic process related to eigenvector computations
on the class of hypermatrices representing higher-order Markov chains.  The
process, which we call the spacey random walk, provides a natural way to
think about tensor eigenvector problems which have traditionally been studied as
just sets of polynomial equations.  As is common with hypermatrix generalizations of
matrix problems, the spacey random walk is more difficult to analyze than the
standard random walk.  In particular, the spacey random walk is not
Markovian---instead, it is a specific type of generalized vertex-reinforced
random walk.  However, the intricacies of the spacey random walk make its
analysis an exciting challenge for the applied mathematics community.  In this
paper alone, we relied on tools tools from dynamical systems, numerical linear
algebra, optimization, and stochastic processes.

Following the work of \citet{benaim1997vertex}, we connected the limiting
distribution of the discrete spacey random walk to the limiting behavior of a
continuous dynamical system.  Through this framework, we fully characterized the
dynamics of the two-state spacey random walk and reached the positive result
that it always converges to a stable equilibrium point.  Analysis of the general
case is certainly more complicated and provides an interesting challenge for future
research.

One major open issue is understanding the \emph{numerical} convergence
properties of the spacey random walk, including algorithms for computing the
limiting distribution (if it exists).  For standard random walks, a simple power
method is sufficient for this analysis.  And generalizations of the power method
have been studied for computing the stationary distribution in
Equation~\ref{eqn:ling_stationary}%
~\cite{chu2014second,gleich2015multilinear,li2014limiting}.  However, we showed
that convergence of the dynamical system does \emph{not} imply convergence of
the power method for computing the stationary distribution of the spacey random
walk.  Nevertheless, the power method still often converges even when
convergence is not guaranteed by the theory~\cite{gleich2015multilinear}.  We
suspect that convergence of the power method is a sufficient condition for the
convergence of the dynamical system.

A final outstanding issue is determining \emph{when} the occupation vector of the
spacey random walk will converge.  We suspect that this problem is undecidable
as similar results have been shown for general dynamical
systems~\cite{buescu2011computability}.  One approach for this problem is to
show that the spacey random walker can simulate a Turing machine at which point
determining convergence is equivalent to the halting problem (see the work of
\citet{moore1990unpredictability} for a similar approach).

\section*{Acknowledgements}
We thank Tao Wu for observing subtleties in our use of the
term stable point in the analysis of the two-state case
when $f(x) \equiv 0$ and our convergence analysis of
the dynamical system in the limit of Euler's method
when the step size $h$ approaches $0$ in the limit.

\bibliographystyle{dgleich-bib}

\bibliography{refs}
 
\end{document}